\theoremstyle{plain} 
\newtheorem{teo}{Teorema }[section]
\newtheorem{maintheorem}{Theorem}
\newtheorem{lema}[teo]{Lemma}
\newtheorem{propo}[teo]{Proposition}
\newtheorem{defi}[teo]{Definition}
\newtheorem{cas2}{Case }
\newtheorem{cas3}{Case }
\title{Topological Entropy for Discontinuous Semiflows}
\author{Nelda Jaque}
\address{
Nelda Jaque, 
Departamento de Matem\'aticas, 
Universidad de Atacama, 
Copayapu 485, Casilla 240,
Copiap\'o, Chile.}
\email{njaquetamblay@gmail.com}
\author{Bernado San Mart\'in}
\address{
Bernardo San Mart\'in,
Departamento de Matem\'aticas, 
Universidad Cat\'olica del Norte, 
Av. Angamos 0610, Casilla 1280, Antofagasta, Chile.}
\email{sanmarti@ucn.cl}
\thanks{This project was partially funded by the Chilean FONDECYT grant 1181183. The first author was also supported by the Chilean CONICYT grant 21160656.}
\begin{document}

\maketitle

\begin{abstract}
We study two variations of Bowen's definitions of topological entropy based on separated and spanning sets which can be applied to the study of discontinuous semiflows on compact metric spaces. We prove that these definitions reduce to Bowen's ones in the case of continuous semiflows. As a second result, we prove that our entropies give a lower bound for the $\tau$-entropy defined by Alves, Carvalho and V\'asquez (2015). Finally, we prove that  for impulsive semiflows satisfying certain regularity condition, there exists a continuous semiflow defined on another compact metric space which is related to the first one by a semiconjugation, and whose topological entropy equals our extended notion of topological entropy by using separated sets for the original semiflow. 

\end{abstract}

\maketitle

\section{Introduction}
Entropy is a notion that quantifies the complexity of a dynamical system. 
This notion was introduced into ergodic theory by Kolgomorov in 1958 and by Sinai in 1959 (see \cite{kolmogorov} and \cite{sinai}, respectively).   
 The entropy introduced by them is usually called metric entropy. 
Since then, this notion has played an important role in the classification of dynamical systems.  
The concept of entropy for  dynamical systems on compact topological spaces  
was given by Adler, Konheim and McAndrew in 1965  (see \cite{alder}). They defined entropy for continuous maps 
in a purely topological way, by using the concept of open cover.  
In 1971,  Bowen introduced  two new definitions of entropy for dynamical systems on  metric spaces, the first one by using the notion of spanning sets and the second one by using the notion of separated sets, and proved that both definitions agree when the dynamical system is continuous and the space is compact (see \cite{bowen}). Moreover, these definitions agree with the topological entropy introduced by Adler et al. The relationship between metric entropy and topological entropy is given by the well-known variational principle.

In this paper we will focus specifically on not necessarily continuous semiflows defined on compact metric spaces. It turns out that the entropies defined by Bowen do not work well in this context. Indeed, it is easy to construct simple examples of discontinuous semiflows where these entropies are infinite (see Section \ref{6}). Motivated by this problem, one can try to find variations of Bowen's definitions of topological entropies that can be applied to the study of not necessarily continuous semiflows. In this direction, Alves, Carvalho and V\'asquez (\cite{v2}) have introduced the notion of topological $\tau$-entropy, which depends on an admissible function $\tau$. Their definition only makes use of separated sets. In this work we study another variation of Bowen's original definition of topological entropies, making use of separated and spanning sets. As a first result we prove that these definitions agree with the usual ones in the case of continuous semiflows. As a second result, we prove that our notions of entropy  give a lower bound for the $\tau$-entropy defined by Alves et.~al, for any admissible function $\tau$. For the rest of the paper we focus on the study of impulsive semiflows satisfying certain regularity conditions. Important contributions on this topic have been made by Kaul (see \cite{i0}), Ciescielski (see \cite{i5} and \cite{i9}), Bonotto (see \cite{i7}) and Bonotto, Bortolan, Caraballo and Collegari (see \cite{i8}). As our third result, we prove that for regular impulsive semiflows, there exists a continuous semiflow on another compact metric space which is related to the first semiflow by a semiconjugation, and whose topological entropy equals our extended notion of topological entropy by using separated sets for the first semiflow.

\section{Setting and Statements}

Here, and throughout this paper, we  denote by  $(X,d)$ a compact metric space and $$\phi:\mathbb{R}^+_0\times X\to X$$ a semiflow that is not necessarily continuous. We will use  the notation $\phi(t,x) = \phi_t(x)$. 
For $\delta>0$, we define the pseudosemimetric\footnote{A pseudosemimetric on $X$ is a map $D:X\times X\to \mathbb{R}_0^+$ satisfying $D(x,x)=0$ and $D(x,y)=D(y,x)$ for all $x,y\in X$.}
$d_{\delta}^{\phi}:X\times X\to\mathbb{R}_0^+$  as
\begin{eqnarray*}\label{parametric}
d_{\delta}^{\phi}(x,y)=\inf\{d(\phi_s(x),\phi_s(y)):s\in[0,\delta)\}.
\end{eqnarray*}
For $x\in X$, $\varepsilon>0$ and $T>0$, let us define
$$\hat{B}(x,\phi,T,\varepsilon,\delta)=\{y\in X : d^{\phi}_{\delta}(\phi_t(x),\phi_t(y))<\varepsilon \mbox{ for all } t\in[0,T]\}.$$
A  set $F\subseteq X$ is said to be\emph{ $(\phi,T,\varepsilon,\delta)$-spanning} if
$$X\subset \bigcup_{y\in F}\hat{B}(y,\phi,T,\varepsilon,\delta),$$
and a set $E\subseteq X$ is said to be\emph{ $(\phi,T,\varepsilon,\delta)$-separated} if 
$$\mbox{for all }x\in E\mbox{ we have } E\cap \hat{B}(x,\phi, T,\varepsilon,\delta)=\{x\}.$$
Define 
$$\hat{r}(\phi,T,\varepsilon,\delta)=\inf\{|F|: F \mbox{ is } (\phi,T,\varepsilon,\delta)\mbox{-spanning}\}$$
and
$$\hat{s}(\phi,T,\varepsilon,\delta)=\sup\{|E|: E \mbox{ is } (\phi,T,\varepsilon,\delta)\mbox{-separated}\}.$$
Here, $|A|$ denotes the cardinality of a subset $A\subseteq X$. Note that $\hat{r}(\phi,T,\varepsilon,\delta)$ or $\hat{s}(\phi,T,\varepsilon,\delta)$ could be infinite.
The \emph{growth rate} of $\hat{r}(\phi,T,\varepsilon,\delta)$ and $\hat{s}(\phi,T,\varepsilon,\delta)$ is defined,  respectively, as
$$\hat{h}_{\text{r}}(\phi,\varepsilon,\delta)=\limsup_{T\to+\infty}\frac{1}{T}\log \hat{r}(\phi,T,\varepsilon,\delta)$$
and
$$\hat{h}_{\text{s}}(\phi,\varepsilon,\delta)=\limsup_{T\to+\infty}\frac{1}{T}\log \hat{s}(\phi,T,\varepsilon,\delta),$$
where we put $\log \infty= \infty$. It is easy to see that the functions $\varepsilon \to \hat{h}_{\text{r}}(\phi,\varepsilon,\delta)$ and $\varepsilon \to \hat{h}_{\text{s}}(\phi,\varepsilon,\delta)$  are nonincreasing. Now, we  define
$$
\hat{h}_{\text{r}}(\phi,\delta)=\lim_{\varepsilon\to0^+}\hat{h}_{\text{r}}(\phi,\varepsilon,\delta) \,\,\,\, \text{and} \,\,\,\,
\hat{h}_{\text{s}}(\phi,\delta)=\lim_{\varepsilon\to0^+}\hat{h}_{\text{s}}(\phi,\varepsilon,\delta),$$
where $\hat{h}_{\text{r}}(\phi,\delta)$ or $\hat{h}_{\text{s}}(\phi,\delta)$ could be infinite.
Here, the functions $\delta \to \hat{h}_{\text{r}}(\phi,\delta)$ and $\delta \to \hat{h}_{\text{s}}(\phi,\delta)$  are nonincreasing. We define 
\begin{eqnarray*}
\hat{h}_{\text{r}}(\phi)=\lim_{\delta\to0^+}\hat{h}_{\text{r}}(\phi,\delta) \,\,\,\, \text{and} \,\,\,\, \hat{h}_{\text{s}}(\phi)=\lim_{\delta\to0^+}\hat{h}_{\text{s}}(\phi,\delta).
\end{eqnarray*}

When $\phi$ is continuous, we recover Bowen's classical entropy that we denote by $h_{top}(\phi)$ (for the definition see Section \ref{3}). This is the content of our first theorem.
\begin{maintheorem}\label{A}
\begin{upshape}
Let $\phi:\mathbb{R}_0^+\times X\to X$ be a continuous semiflow on the compact metric space $(X,d)$. Then
$$\hat{h}_{\text{r}}(\phi)=\hat{h}_{\text{s}}(\phi)=h_{top}(\phi).$$
\end{upshape}
\end{maintheorem}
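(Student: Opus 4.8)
The plan is to compare, for each fixed $\delta$, the dynamical balls $\hat{B}(x,\phi,T,\varepsilon,\delta)$ with Bowen's classical balls $B_T(x,\varepsilon)=\{y: d(\phi_t(x),\phi_t(y))<\varepsilon\ \text{for all}\ t\in[0,T]\}$, and then to transfer the comparison to the spanning and separated counting functions $r(\phi,T,\varepsilon)$, $s(\phi,T,\varepsilon)$ that define $h_{top}(\phi)$. One inclusion is immediate: since $0\in[0,\delta)$, we have $d_{\delta}^{\phi}(a,b)\le d(a,b)$ for all $a,b\in X$, and therefore $B_T(x,\varepsilon)\subseteq\hat{B}(x,\phi,T,\varepsilon,\delta)$ for every $\delta>0$. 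Consequently every Bowen $(T,\varepsilon)$-spanning set is $(\phi,T,\varepsilon,\delta)$-spanning, and every $(\phi,T,\varepsilon,\delta)$-separated set is Bowen $(T,\varepsilon)$-separated, which gives $\hat{r}(\phi,T,\varepsilon,\delta)\le r(\phi,T,\varepsilon)$ and $\hat{s}(\phi,T,\varepsilon,\delta)\le s(\phi,T,\varepsilon)$. Passing to growth rates and then to the limits in $\varepsilon$ and $\delta$ yields the easy halves $\hat{h}_{\mathrm{r}}(\phi)\le h_{top}(\phi)$ and $\hat{h}_{\mathrm{s}}(\phi)\le h_{top}(\phi)$.

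For the reverse inequalities I would establish the opposite inclusion: given $\varepsilon>0$ and any $\varepsilon'<\varepsilon$, there exists $\delta_1>0$ such that for every $\delta\in(0,\delta_1)$, every $T>0$ and every $x\in X$ one has $\hat{B}(x,\phi,T,\varepsilon',\delta)\subseteq B_T(x,\varepsilon)$. Indeed, if $y\in\hat{B}(x,\phi,T,\varepsilon',\delta)$ then for each $t\in[0,T]$ there is $s_t\in[0,\delta)$ with $d(\phi_{t+s_t}(x),\phi_{t+s_t}(y))<\varepsilon'$, and writing $\phi_{t+s_t}(x)=\phi_{s_t}(\phi_t(x))$ the triangle inequality for the metric $d$ gives
$$d(\phi_t(x),\phi_t(y))\le d(\phi_t(x),\phi_{s_t}(\phi_t(x)))+\varepsilon'+d(\phi_{s_t}(\phi_t(y)),\phi_t(y)).$$
The crucial point is that each outer term has the form $d(w,\phi_{s_t}(w))$ with $w\in X$ and $s_t<\delta$. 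Since the map $(s,w)\mapsto d(w,\phi_s(w))$ is continuous on the compact set $[0,1]\times X$ and vanishes at $s=0$, it is uniformly continuous, so for $\delta$ small both outer terms fall below $(\varepsilon-\varepsilon')/2$ independently of $w$. This bound is uniform in $t$ and in the pair $x,y$, which is precisely what lets the argument run for all $T$ at once: the dangerous time-shift $d(\phi_t(\cdot),\phi_{t+s}(\cdot))$ is absorbed, via the semiflow property, into the single-variable quantity $d(w,\phi_s(w))$, so no uniform continuity of $\phi$ on the noncompact $[0,\infty)\times X$ is required. I expect this to be the main obstacle, and the compactness reduction just described is its resolution.

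With this second inclusion in hand, every $(\phi,T,\varepsilon',\delta)$-spanning set is Bowen $(T,\varepsilon)$-spanning and every Bowen $(T,\varepsilon)$-separated set is $(\phi,T,\varepsilon',\delta)$-separated, so (taking $\varepsilon'=\varepsilon/2$)
$$r(\phi,T,\varepsilon)\le\hat{r}(\phi,T,\varepsilon/2,\delta),\qquad s(\phi,T,\varepsilon)\le\hat{s}(\phi,T,\varepsilon/2,\delta)$$
for all $\delta<\delta_1(\varepsilon)$. Combined with the easy halves, this sandwiches $\hat{r}$ and $\hat{s}$ between two Bowen counting functions at scales $\varepsilon$ and $\varepsilon/2$. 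Applying $\limsup_{T\to+\infty}\frac{1}{T}\log$, then letting $\delta\to0^+$ and finally $\varepsilon\to0^+$, both outer terms converge to $h_{top}(\phi)$, forcing $\hat{h}_{\mathrm{r}}(\phi)=\hat{h}_{\mathrm{s}}(\phi)=h_{top}(\phi)$. The remaining bookkeeping—the strict versus nonstrict inequalities in the ball radii and the correct order of the three limits—is routine, using the monotonicity of $\hat{h}_{\mathrm{r}}$ and $\hat{h}_{\mathrm{s}}$ in $\varepsilon$ and $\delta$ already recorded in the text.
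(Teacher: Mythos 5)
Your proposal is correct and follows essentially the same route as the paper: the trivial inclusion $B(x,\phi,T,\varepsilon)\subseteq\hat{B}(x,\phi,T,\varepsilon,\delta)$ for one direction, and for the other the inclusion $\hat{B}(x,\phi,T,\varepsilon',\delta)\subseteq B(x,\phi,T,\varepsilon)$ obtained by using the semiflow identity $\phi_{t+s}=\phi_s\circ\phi_t$ to reduce the time-shift estimate to uniform continuity of $(s,w)\mapsto d(w,\phi_s(w))$ on a compact set, which is exactly the content of the paper's Lemmas \ref{A1} and \ref{A2}. The only cosmetic differences are that the paper argues the second inclusion by contrapositive and fixes the quantifiers as $\varepsilon<\alpha/2$, $\delta<\beta(\alpha)$ rather than $\varepsilon'=\varepsilon/2$; the interchange of the $\varepsilon$ and $\delta$ limits you invoke is indeed harmless by the stated monotonicity.
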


As mentioned in the Introduction, some variants of the notion of entropy for not necessarily continuous semiflows have already been studied before. In particular, in 2015, Alves, Carvalho and V\'asquez (\cite{v2})  introduced the concept of topological $\tau$-entropy for this type of semiflows, where $\tau$ is a so-called admissible function. Their definitions are as follows. Let $\tau$ be a function assigning to each $x\in X$ a strictly increasing sequence $(\tau_n(x))_{n\in A_0(x)}$ of nonnegative numbers, where either $A_0(x)=\{0,1,...,l\}$ for some
$l\in\mathbb{N}$ or $A_0(x)=\mathbb{N}_0$, and such that $\tau_0(x)=0$ for all $x\in X$. One says that $\tau$ is \emph{admissible} with respect to a subset  $Z\subset X$ if there exists a constant $\gamma>0$ such that
\begin{enumerate}
\item $\tau_1(x)\geq \gamma$ for all $x\in Z$,\end{enumerate}
and for all $x\in X$ and all $n\in \mathbb{N}$ with $n+1\in A(x)$, we have
\begin{enumerate}
\item[(2)] $\tau_{n+1}(x)-\tau_n(x)\geq\gamma$,
\item[(3)] $\tau_1(\phi_s(x))=\tau_{n}(x)-s$ if $\tau_{n-1}(x)<s<\tau_n(x)$, and
\item[(4)] $\tau_1(\phi_s(x))=\tau_{n+1}(x)$ if $s=\tau_n(x)$.
\end{enumerate}
For each admissible function $\tau$, $x\in X$, $T>0$ and $\rho>0$ with $\rho<\gamma/2$, one defines
$$J_{T,\rho}^{\tau}(x) = (0,T]\setminus\bigcup_{j\in A_0(x)}(\tau_j(x)-\rho,\tau_j(x)+\rho).$$
The \emph{$\tau$-dynamical ball} of radius $\epsilon>0$  centred at $x$ is defined as
$$B^{\tau}(x,\phi, T,\epsilon,\rho)=\left\{y\in X:d(\phi_t (x),\phi_t (y))< \epsilon,\mbox{ for all }t\in J_{T,\rho}^{\tau}(x) \right\}.$$
Accordingly, a finite set $E\subseteq X$ is said to be $(\phi,\tau, T, \epsilon,\rho)$-\emph{separated} if
$$\mbox{ for all }x\in E \mbox{ we have } E\cap B^{\tau}(x,\phi, T,\epsilon,\rho)=\{x\}.$$
The \emph{$\tau$-topological entropy} of $\phi$ is defined as
$$h^{\tau}_{\text{top}}(\phi)= \lim_{\rho\to 0^+}\lim_{\epsilon\to 0^+}\limsup_{T\to +\infty}\frac{1}{T} \log s^{\tau}(\phi, T,\epsilon,\rho),$$
where
$$
s^{\tau}(\phi, T,\epsilon,\rho)=\sup\,\{|E|:E \text{ is } (\phi,\tau, T, \epsilon,\rho)\text{-separated}\}.$$

The second result of this paper gives a precise comparison between $\hat{h}_{\text{s}}(\phi)$ and $h_{\text{top}}^{\tau}(\phi)$, valid for any $\phi$. It also gives a comparison between $\hat{h}_{\text{s}}(\phi)$ and $\hat{h}_{\text{r}}(\phi)$. More precisely, we prove the following result.
\begin{maintheorem}\label{B}
Let $\phi:\mathbb{R}_0^+\times X\to X$ be a semiflow and $\tau$ an admissible function on $X$. Then
\begin{eqnarray*}\label{C1}
\hat{h}_{\text{r}}(\phi)\leq \hat{h}_{\text{s}}(\phi) \leq h_{\text{top}}^{\tau}(\phi).
\end{eqnarray*}
\end{maintheorem}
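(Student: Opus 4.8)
The plan is to prove the two inequalities separately, the first being a routine adaptation of the classical Bowen comparison between spanning and separated sets, and the second being the substantive part.

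For $\hat{h}_{\text{r}}(\phi)\le\hat{h}_{\text{s}}(\phi)$, the point is that $d^{\phi}_{\delta}$ is symmetric, so the ball relation is symmetric: $y\in\hat{B}(x,\phi,T,\varepsilon,\delta)$ iff $x\in\hat{B}(y,\phi,T,\varepsilon,\delta)$. I would then argue that any maximal $(\phi,T,\varepsilon,\delta)$-separated set $E$ is automatically $(\phi,T,\varepsilon,\delta)$-spanning: for $x\notin E$, maximality forces $E\cup\{x\}$ not to be separated, and since $E$ itself is separated the only possible violation pairs $x$ with some $y\in E$ for which $x\in\hat{B}(y,\phi,T,\varepsilon,\delta)$ (using symmetry to rewrite the two cases $y\in\hat{B}(x,\dots)$ and $x\in\hat{B}(y,\dots)$). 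Hence $x$ lies in the ball around a point of $E$, so $E$ spans. When $\hat{s}(\phi,T,\varepsilon,\delta)<\infty$ a maximal separated set of that cardinality exists, giving $\hat{r}(\phi,T,\varepsilon,\delta)\le\hat{s}(\phi,T,\varepsilon,\delta)$; when it is infinite the inequality is trivial. Applying $\frac1T\log(\cdot)$, then $\limsup_{T}$, then $\varepsilon\to0^+$ and $\delta\to0^+$ yields $\hat{h}_{\text{r}}(\phi)\le\hat{h}_{\text{s}}(\phi)$.

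For $\hat{h}_{\text{s}}(\phi)\le h^{\tau}_{\text{top}}(\phi)$ the plan is to reduce everything to a single ball inclusion. I would fix $\rho<\gamma/2$, set $\delta:=3\rho$ and take the radius $\varepsilon:=\epsilon$, and prove
\[
B^{\tau}(x,\phi,T+\delta,\epsilon,\rho)\subseteq\hat{B}(x,\phi,T,\epsilon,\delta)\qquad\text{for every }x\in X.
\]
To see this, take $y$ in the left-hand ball and fix $t\in[0,T]$; I must produce $s\in[0,\delta)$ with $t+s\in J^{\tau}_{T+\delta,\rho}(x)$, for then $d(\phi_{t+s}(x),\phi_{t+s}(y))<\epsilon$ and hence $d^{\phi}_{\delta}(\phi_t(x),\phi_t(y))\le d(\phi_{t+s}(x),\phi_{t+s}(y))<\epsilon$. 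The existence of such an $s$ is exactly where admissibility enters: since $\rho<\gamma/2$, the removed intervals $(\tau_j(x)-\rho,\tau_j(x)+\rho)$ are pairwise disjoint of length $2\rho$, and consecutive ones are separated by a gap $\ge\gamma-2\rho>0$ of times lying in $J^{\tau}$; thus from any $t$ one can escape forward to the right endpoint of the current removed interval within distance $<2\rho<\delta$ (the slightly larger constant $\delta=3\rho$ absorbs the possibly short first gap $\tau_1(x)$, where the intervals around $\tau_0(x)=0$ and $\tau_1(x)$ may merge). Enlarging the horizon to $T+\delta$ on the left-hand side guarantees that this forward escape never leaves the admissible window, which is the mechanism that removes the boundary obstruction near $t=T$.

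Granting the inclusion, every $(\phi,T,\epsilon,\delta)$-separated set is $(\phi,\tau,T+\delta,\epsilon,\rho)$-separated, so $\hat{s}(\phi,T,\epsilon,\delta)\le s^{\tau}(\phi,T+\delta,\epsilon,\rho)$. Dividing by $T$ and taking $\limsup_{T\to\infty}$, the shift from $T$ to $T+\delta$ contributes only a factor $(T+\delta)/T\to1$ and therefore disappears, giving $\hat{h}_{\text{s}}(\phi,\epsilon,\delta)\le\limsup_{T\to\infty}\frac1T\log s^{\tau}(\phi,T,\epsilon,\rho)$. Letting $\epsilon\to0^+$ yields $\hat{h}_{\text{s}}(\phi,\delta)\le\lim_{\epsilon\to0^+}\limsup_{T\to\infty}\frac1T\log s^{\tau}(\phi,T,\epsilon,\rho)$, and then letting $\rho\to0^+$ (which forces $\delta=3\rho\to0^+$) and invoking the monotonicity of $\delta\mapsto\hat{h}_{\text{s}}(\phi,\delta)$ recorded above, the left side tends to $\hat{h}_{\text{s}}(\phi)$ and the right side to $h^{\tau}_{\text{top}}(\phi)$, completing the proof. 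I expect the main obstacle to be precisely this uniform forward-escape step: verifying that for every $t\in[0,T]$ there is a nearby time in $J^{\tau}_{T+\delta,\rho}(x)$ within a window of fixed size $\delta$, uniformly in $x$ and $t$, while simultaneously controlling the two boundary effects (the possibly small first gap $\tau_1(x)$ near $t=0$, and the loss of room near $t=T$). Both are handled by combining the admissibility gap bound with the $+\delta$ enlargement of the horizon, but fixing the proportionality constant between $\delta$ and $\rho$ and checking the endpoint cases is the delicate bookkeeping of the argument.
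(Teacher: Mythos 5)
Your proposal is correct and follows essentially the same route as the paper: the first inequality is obtained exactly as in Lemma \ref{B1} (a maximal $(\phi,T,\varepsilon,\delta)$-separated set is automatically spanning), and the second via the same ball inclusion $B^{\tau}(x,\phi,\cdot,\varepsilon,\rho)\subseteq\hat{B}(x,\phi,T,\varepsilon,\delta)$ used in Lemma \ref{B2}, where the paper simply takes $\delta>2\rho$ and keeps the horizon $T$ on both sides. The only difference is your enlargement of the $\tau$-horizon to $T+\delta$ with $\delta=3\rho$, which explicitly handles the endpoint case where $[t,t+\delta]\cap(0,T]$ may be too short to meet $J^{\tau}_{T,\rho}(x)$ (a boundary detail the paper's argument passes over) and which, as you note, washes out in the growth rate.
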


The third and last result of this paper relates  $\hat{h}_{\text{s}}(\phi)$, for an impulsive semiflow $\phi$ satisfying some regularity conditions, to the topological entropy $h_{top}(\tilde{\phi})$ of certain continuous semiflow $\tilde{\phi}$  defined on a compact metric space which is related to $\phi$ by a semiconjugation. In order to state our result, we must first give some definitions. Let   $$\varphi:\mathbb{R}^+_0\times X\to X$$ be a continuous semiflow on $X$, $D\subset X$ a proper closed subset  and  $I:D\to X$ a continuous function. The \emph{first impulse time function} $\tau_1:X\to\mathbb{R}_0^+\cup\{\infty\}$  is defined by
$$\tau_1(x)=\left\{\begin{array}{ll}\inf\left\{t> 0:\varphi_t(x)\in D\right\} ,& \text{if } \varphi_t(x)\in D\text{ for some }t>0;\\
                                                                       +\infty, & \text{otherwise.}\end{array}\right.$$
If $\tau_1(x)<\infty$, we define the \emph{first impulse point} as
$$x^1=x^1(x)=\varphi_{\tau_1(x)}(x).$$
Inductively, if $\tau_n(x)<\infty$,  the \emph{$(n+1)$-th impulsive time} is defined by
$$\tau_{n+1}(x)= \tau_1(x) +\sum_{k=1}^{n}\tau_1(I(x^k)),$$
and if $\tau_1(I(x^n))<\infty$, the $(n+1)$-th impulsive point is defined by 
$$x^{n+1}=x^{n+1}(x)=\varphi_{\tau_1(I(x^n))}(I(x^n)).$$
Let $T(x)= \sup \{\tau_n(x):n\geq 1\}$. 
The \emph{impulsive drift} $\gamma_x:[0,T(x))\to X$ for a point $x\in X$  is defined inductively 
by
$$\gamma_x(t)=\varphi_t(x), \mbox{ if } t \in [0, \tau_1(x)),$$
and
$$\gamma_x(t)=\varphi_{t-\tau_n(x)}(I(x^n)), \mbox{ if }  t \in [\tau_n(x), \tau_{n+1}(x)) \mbox{ and }\tau_n(x)<\infty.$$
Observe that if $I(D)\cap D=\emptyset$ then $T(x)=\infty$. We say that $(X,\varphi, D, I)$ is an \emph{impulsive dynamical system} if for all $x\in X$ we have:
 \begin{enumerate}
 \item $\tau_1(x)>0$ and
 \item $T(x)=\infty$.
 \end{enumerate}
For each impulsive dynamical system $(X,\varphi,D,I)$, we define the \emph{associated impulsive semiflow} $\phi: \mathbb{R}_0^+\times X \rightarrow X$  as
$$ \phi_t(x) = \gamma_x(t),$$
where $\gamma_x(t)$ is the impulsive drift for $x\in X$. It is easy to see that $\phi$ is indeed a semiflow (see \cite{i7}), although it is not necessarily continuous. Moreover, when $(X,\varphi, D, I)$ is an impulsive dynamical system, then $\tau_1$ is lower semicontinuous on $X\setminus D$ (see \cite{i5}). \\
For $\eta>0$, we put
$$\begin{array}{rcl}D_{\eta}=\bigcup_{x\in D}\{\varphi_t(x): 0<t<\eta\} &\mbox{and}& X_{\eta}=X\setminus (D\cup D_{\eta}).\end{array}$$
\begin{defi}\begin{upshape}\label{RIDS}We say that the impulsive dynamical system $(X,\varphi, D, I)$ is \emph{regular} if $I(D)\cap D=\emptyset$, $I$ is Lipschitz and if there exists $\eta>0$ such that
\begin{enumerate}
\item $D_{\xi}$ is open for some  $0<\xi<\eta/4$,
\item $\varphi_{\xi}(D_{\xi})\subset X_{\xi}$, and
\item for all $x\in I(D)$ and $t\in(0,\xi]$, $\varphi_t(x)\notin I(D)$.
\end{enumerate}\end{upshape}\end{defi}
The semiflow $\phi: \mathbb{R}_0^+\times X \rightarrow X$ associated to the regular impulsive dynamical system  $(X,\varphi, D, I)$ is called a \emph{regular impulsive semiflow}. The third result of this paper is the following.
\begin{maintheorem}\label{F}
\begin{upshape}
Let $\phi:\mathbb{R}_0^+\times X\to X$ be the semiflow of a regular impulsive dynamical system $(X,\varphi,D,I)$.
Then there exist a compact metric space $Y$, a continuous semiflow $\tilde{\phi}:\mathbb{R}_0^+\times Y\to Y$ and a uniform continuous bijection $H:X_{\xi}\to Y$ such that for all $t\geq0$
$$\tilde{\phi}_t\circ H=H\circ\phi_t.$$
Moreover,
$$h_{top}(\tilde{\phi})=\hat{h}_{\text{s}}(\phi).$$
\end{upshape}
\end{maintheorem}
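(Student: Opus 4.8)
The plan is to remove the discontinuity of $\phi$ by a flow-compatible gluing that identifies each impulse point with its image under $I$, and to let $Y$ be the resulting quotient together with the induced flow. Concretely, I would form $Y$ from $X$ by identifying, for each $z\in D$, the short forward $\varphi$-arc issuing from $z$ with the short forward $\varphi$-arc issuing from $I(z)$ (both of $\varphi$-length $\xi$), so that the instantaneous jump $z\mapsto I(z)$ of $\phi$ is replaced by a continuous passage of the quotient flow $\tilde\phi$. The set $X_\xi=X\setminus(D\cup D_\xi)$ is designed to be a fundamental domain for this identification: it avoids the glued collar $D_\xi$ while still containing exactly one representative (the $I(D)$-side) of each glued fiber. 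I would define $H\colon X_\xi\to Y$ as the restriction of the quotient projection, check that it is a bijection onto $Y$, and observe that it is $1$-Lipschitz for the quotient metric, hence uniformly continuous. Compactness of $Y$ follows because it is a continuous quotient image of the compact space $X$, while metrizability and Hausdorffness are where $I(D)\cap D=\varnothing$, the Lipschitz hypothesis on $I$, and the openness of $D_\xi$ are first used.

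The second step is to define $\tilde\phi$ and prove it is continuous. On $H(X_\xi)$ one is forced to set $\tilde\phi_t(H(x))=H(\phi_t(x))$; for this to be consistent I must first check that $X_\xi$ is forward invariant under $\phi$, which is the purpose of conditions (2) and (3) of Definition \ref{RIDS} (no orbit re-enters the collar $D_\xi$ or re-impulses within time $\xi$), so that the right-hand side is always defined. Continuity of $\tilde\phi$ is the heart of the construction: away from impulse instants it is inherited from continuity of $\varphi$, while across an impulse instant the gluing makes the pre-impulse limit (a point approaching $D$) and the post-impulse value ($I$ of that point) coincide in $Y$. The Lipschitz control on $I$ turns this coincidence into genuine continuity, and $\varphi_\xi(D_\xi)\subset X_\xi$ provides the uniform room needed for joint continuity in $(t,y)$. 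The semiconjugation identity $\tilde\phi_t\circ H=H\circ\phi_t$ then holds on $X_\xi$ by the very definition of $\tilde\phi$.

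For the entropy equality I would compare Bowen balls of $\tilde\phi$ in the quotient metric $d_Y$ with the balls $\hat B(\cdot,\phi,T,\varepsilon,\delta)$, the point being that $d_\delta^\phi$ heals a jump only inside a time window of length $\delta$, whereas $d_Y$ heals it completely. For the inequality $\hat h_{\text{s}}(\phi)\le h_{top}(\tilde\phi)$ I would show that, for every fixed $\delta$, a $(\phi,T,\varepsilon,\delta)$-separated set is sent by $H$ to a set that is Bowen $(\tilde\phi,T,\varepsilon')$-separated up to a uniform loss, giving $\hat s(\phi,T,\varepsilon,\delta)\le s(\tilde\phi,T,\varepsilon')$ and hence the bound after letting $T\to\infty$, $\varepsilon\to0$ and finally $\delta\to0$. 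For the reverse inequality I would start from a maximal $d_Y$-Bowen-separated set for $\tilde\phi$, pull it back through the bijection $H$, and check that once $\delta$ is small enough the impulse-time mismatch between two nearby orbits stays below $\delta$, so that the finite healing window of $d_\delta^\phi$ absorbs it and the pulled-back set is $(\phi,T,\varepsilon,\delta)$-separated; taking the limits in the same order yields $h_{top}(\tilde\phi)\le\hat h_{\text{s}}(\phi)$.

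The step I expect to be the main obstacle is precisely this matching of dynamical balls, because it couples two difficulties. First, one must control, uniformly along orbits of unbounded length and uniformly in $T$, the mismatch between the impulse instants of two $d_Y$-close points, so that the finite window $\delta$ of $d_\delta^\phi$ suffices to absorb it; this is where the uniform lower gap $\gamma$ between consecutive impulses and all four regularity conditions of Definition \ref{RIDS} are consumed. Second, one must justify interchanging the resulting estimates with the three nested limits $\delta\to0$, $\varepsilon\to0$ and $\limsup_{T\to\infty}$ that define $\hat h_{\text{s}}(\phi)$, since a naive $\delta\to0$ limit of $d_\delta^\phi$ degenerates back to $d$ and loses the healing. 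Obtaining a comparison that is uniform in $T$ and stable under these limits, rather than merely pointwise, is the crux of the argument.
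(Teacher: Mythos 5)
Your overall architecture (quotient of $X$ that glues the jump, induced semiflow, $H=\pi|_{X_{\xi}}$, two-sided comparison of separated sets) is the paper's, but there are concrete gaps. First, the gluing itself: you identify the whole arc $\{\varphi_t(z):0<t<\xi\}$ with $\{\varphi_t(I(z)):0<t<\xi\}$, whereas the discontinuity of $\phi$ lives only at the instant the orbit reaches $z\in D$ and is replaced by $I(z)$; after that instant the orbit follows the $\varphi$-orbit of $I(z)$, which is unrelated to the continuation $\varphi_t(z)$ of the old orbit. Identifying $\varphi_t(z)$ with $\varphi_t(I(z))$ for $t>0$ therefore brings to small quotient distance pairs of points of $X_{\xi}$ whose forward $\phi$-orbits genuinely diverge (e.g.\ a point whose $\varphi$-orbit passes near $\varphi_t(z)$ without meeting $D$ versus $\varphi_t(I(z))$), and this breaks the continuity of the induced semiflow and the separated-set comparison. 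The correct relation identifies only $z\sim I(z)$ (and $z\sim z'$ when $I(z)=I(z')$), so that the collapsed set is confined to $D\cup I(D)$, where the regularity conditions and the continuity of the first-impulse time $\tau^*$ on $X_{\xi}\cup D$ give continuity of $\tilde{\phi}$; one also needs an explicit chain pseudometric $\tilde{d}$ on the quotient and a proof (using $I$ Lipschitz and $I(D)\cap D=\emptyset$) that it is a genuine metric — none of which is automatic.

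Second, both halves of the entropy equality have gaps as described. For $h_{top}(\tilde{\phi})\leq\hat{h}_{\text{s}}(\phi)$: pulling back a Bowen $(\tilde{\phi},T,\varepsilon)$-separated set gives, for each pair, a \emph{single} time at which the orbits are $\varepsilon$-apart in $Y$, hence $\beta$-apart in $X$; but $(\phi,T,\beta,\delta)$-separation requires $d(\phi_s(x),\phi_s(y))\geq\beta$ for \emph{all} $s$ in a window of length $\delta$, and no control of impulse-time mismatch converts one instant into a window. The fix is to first use Theorem \ref{A} for the continuous semiflow $\tilde{\phi}$ to write $h_{top}(\tilde{\phi})=\hat{h}_{\text{s}}(\tilde{\phi})$ and then pull back $(\tilde{\phi},T,\varepsilon,\delta)$-separated sets, whose window condition does transfer through the uniformly continuous $H$. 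For $\hat{h}_{\text{s}}(\phi)\leq h_{top}(\tilde{\phi})$: (i) $H$ is defined only on $X_{\xi}$, so your comparison bounds only $\hat{h}_{\text{s}}(\phi|_{X_{\xi}})$, and a separate argument is needed to show $\hat{h}_{\text{s}}(\phi)=\hat{h}_{\text{s}}(\phi|_{X_{\xi}})$ (a finite cover of $\overline{D_{\xi}}$ by sets whose points stay $\varepsilon$-close for time $\eta-\xi$, pushed into $X_{\xi}$ by $\phi_{\xi}$, splitting a separated set into boundedly many pieces); (ii) the real crux of the forward direction is not the order of limits but locating, inside each $\delta$-window on which $d(\phi_s(x),\phi_s(y))\geq\varepsilon$, a time $s$ at which at least one orbit lies outside fixed disjoint neighborhoods of $D$ and $I(D)$, since only there is $\tilde{d}$ comparable to $d$; this is where conditions (2) and (3) of Definition \ref{RIDS} and the calibration $\varepsilon<\epsilon<\delta$ are actually consumed, and your sketch does not supply that argument.
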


The rest of the paper is devoted to the proofs of Theorems \ref{A}, \ref{B} and  \ref{F}. These are given in Section \ref{3}, \ref{4}, and \ref{5}, respectively. Finally, in the Section \ref{6}, we give an example of a discontinuous semiflow $\phi$ where the usual notions of entropy are infinite, and calculate the new entropies $\hat{h}_{\text{s}}(\phi)$ and $\hat{h}_{\text{r}}(\phi)$.

\section{Proof of Theorem \ref{A}}\label{3}

Let us consider $\phi:\mathbb{R}_0^+\times X\to X$ a continuous semiflow on the compact metric space $(X,d)$. Before proving Theorem \ref{A}, we briefly recall Bowen's definition of topological entropy. Given $x\in X$, $T>0$ and $\varepsilon>0$ put
$$B(x,\phi,T,\varepsilon)=\{y\in X : d(\phi_t(x),\phi_t(y))<\varepsilon, \mbox{ for all } t\in[0,T]\}.$$
Using this, one defines $(\phi,T,\varepsilon)$-spanning and $(\phi,T,\varepsilon)$-separated sets in the usual way, as done in the Introduction, replacing $B(x,\phi,T,\varepsilon,\delta)$ by $B(x,\phi,T,\varepsilon)$. Next, one defines $r(\phi,T,\varepsilon)$ and $s(\phi,T,\varepsilon)$ accordingly, and puts
$$h_\text{r}(\phi)=\lim_{\varepsilon\to0^+}\limsup_{T\to+\infty}\frac{1}{T}\log r(\phi,T,\varepsilon)$$
and
$$h_\text{s}(\phi)=\lim_{\varepsilon\to0^+}\limsup_{T\to+\infty}\frac{1}{T}\log s(\phi,T,\varepsilon).$$
In \cite{bowen}, Bowen showed that when $X$  compact and $\phi$ is continuous, one has
$$h_{\text{r}}(\phi) = h_{\text{s}}(\phi).$$
He then defined topological entropy for a continuous semiflow as $$h_{top}(\phi)=h_{\text{r}}(\phi) = h_{\text{s}}(\phi).$$
Theorem \ref{A} will follow from the following two lemmas.

\begin{lema}\label{A1}
Let $\phi:\mathbb{R}_0^+\times X\to X$ be a semiflow (not necessarily continuous) on the compact metric space $(X,d)$. Then
\begin{eqnarray*}\label{111}
 \hat{h}_{\text{s}}(\phi)\leq h_{\text{s}}(\phi) \,\,\,\ \text{and} \,\,\,\ \hat{h}_{\text{r}}(\phi)\leq h_{\text{r}}(\phi).
\end{eqnarray*}

\end{lema}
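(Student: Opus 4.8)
The plan is to exploit the fact that the pseudosemimetric $d_\delta^\phi$ is pointwise bounded above by the original metric $d$, so that the hat-dynamical balls $\hat{B}$ always contain the Bowen balls $B$. First I would record the elementary inequality $d_\delta^\phi(u,v)\le d(u,v)$ for all $u,v\in X$: since $\phi$ is a semiflow we have $\phi_0=\mathrm{id}$, so the value $s=0$ is admissible in the infimum defining $d_\delta^\phi$, giving $d_\delta^\phi(u,v)=\inf_{s\in[0,\delta)}d(\phi_s(u),\phi_s(v))\le d(\phi_0(u),\phi_0(v))=d(u,v)$. Applying this with $u=\phi_t(x)$ and $v=\phi_t(y)$ shows that if $d(\phi_t(x),\phi_t(y))<\varepsilon$ for all $t\in[0,T]$, then also $d_\delta^\phi(\phi_t(x),\phi_t(y))<\varepsilon$ for all $t\in[0,T]$; hence $B(x,\phi,T,\varepsilon)\subseteq \hat{B}(x,\phi,T,\varepsilon,\delta)$ for every $x\in X$, $T>0$, $\varepsilon>0$ and $\delta>0$.

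The second step is to convert this inclusion of balls into the two counting inequalities $\hat{r}(\phi,T,\varepsilon,\delta)\le r(\phi,T,\varepsilon)$ and $\hat{s}(\phi,T,\varepsilon,\delta)\le s(\phi,T,\varepsilon)$. For spanning sets, any $(\phi,T,\varepsilon)$-spanning set $F$ satisfies $X\subseteq\bigcup_{y\in F}B(y,\phi,T,\varepsilon)\subseteq\bigcup_{y\in F}\hat{B}(y,\phi,T,\varepsilon,\delta)$, so $F$ is $(\phi,T,\varepsilon,\delta)$-spanning, and taking infima over $F$ gives the first inequality. For separated sets I would argue in the reverse direction: if $E$ is $(\phi,T,\varepsilon,\delta)$-separated, then for each $x\in E$ the inclusion $B(x,\phi,T,\varepsilon)\subseteq\hat{B}(x,\phi,T,\varepsilon,\delta)$ yields $E\cap B(x,\phi,T,\varepsilon)\subseteq E\cap \hat{B}(x,\phi,T,\varepsilon,\delta)=\{x\}$, and since $x\in B(x,\phi,T,\varepsilon)$ we obtain $E\cap B(x,\phi,T,\varepsilon)=\{x\}$; thus every $(\phi,T,\varepsilon,\delta)$-separated set is already $(\phi,T,\varepsilon)$-separated, and taking suprema over $E$ gives the second inequality.

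Finally I would propagate these inequalities through the three successive limits. Taking logarithms, dividing by $T$ and passing to $\limsup_{T\to+\infty}$ yields $\hat{h}_{\text{r}}(\phi,\varepsilon,\delta)\le \limsup_{T\to+\infty}\frac{1}{T}\log r(\phi,T,\varepsilon)$ and the analogous statement for $s$. Letting $\varepsilon\to0^+$, using that both sides are nonincreasing in $\varepsilon$ so that the limits exist, gives $\hat{h}_{\text{r}}(\phi,\delta)\le h_{\text{r}}(\phi)$ and $\hat{h}_{\text{s}}(\phi,\delta)\le h_{\text{s}}(\phi)$; since the right-hand sides are independent of $\delta$, letting $\delta\to0^+$ produces the claimed bounds $\hat{h}_{\text{r}}(\phi)\le h_{\text{r}}(\phi)$ and $\hat{h}_{\text{s}}(\phi)\le h_{\text{s}}(\phi)$. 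I do not expect a genuine obstacle here, since this is the easy comparison direction; the only point requiring care is that spanning and separated sets are dual, so that the ball inclusion must be used in opposite directions in the two cases, and it is worth noting that no continuity of $\phi$ is invoked, consistent with the hypothesis.
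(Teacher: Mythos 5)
Your proposal is correct and follows essentially the same route as the paper: the key ball inclusion $B(x,\phi,T,\varepsilon)\subseteq \hat{B}(x,\phi,T,\varepsilon,\delta)$ (which the paper establishes by contrapositive and you establish directly via $d_\delta^\phi \le d$, using that $s=0$ lies in $[0,\delta)$), followed by the same transfer to spanning/separated counts and passage to the limits. No gaps.
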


\begin{proof}
Given $\delta>0$, $x\in X$, $\varepsilon>0$ and $T>0$, we have
$$B(x,\phi,T,\varepsilon)\subseteq \hat{B}(x,\phi,T,\varepsilon,\delta).$$
Indeed, suppose $y\notin \hat{B}(x,\phi,T,\varepsilon,\delta)$. Then, there exists $t\in[0,T]$ such that, for all  $s\in[t,t+\delta]$ we have
$$
d(\phi_s(x),\phi_s(y))\geq \varepsilon.
$$
Hence $y\notin B(x,\phi,T,\varepsilon)$. This implies that if $E$ is $(\phi,T,\varepsilon,\delta)$-separated, then $E$ is $(\phi,T,\varepsilon)$-separated. On the other hand, if $F$ is $(\phi,T,\varepsilon)$-spanning, then $F$ is $(\phi,T,\varepsilon,\delta)$-spanning. Therefore
$$ \hat{s}(\phi,T,\varepsilon,\delta)\leq s(\phi,T,\varepsilon)$$
and
$$ \hat{r}(\phi,T,\varepsilon,\delta)\leq r(\phi,T,\varepsilon).$$
Thus
$$ \frac{1}{T}\displaystyle\log \hat{s}(\phi,T,\varepsilon,\delta)\leq \frac{1}{T}\displaystyle\log s(\phi,T,\varepsilon)$$
and
$$\frac{1}{T}\displaystyle\log \hat{r}(\phi,T,\varepsilon,\delta)\leq \frac{1}{T}\displaystyle\log r(\phi,T,\varepsilon).$$
Taking limits we obtain the desired inequalities. 
\end{proof}

\begin{lema}\label{A2}
Let $\phi:\mathbb{R}_0^+\times X\to X$ be a continuous semiflow on the compact metric space $(X,d)$. Then
\begin{eqnarray*}\label{1}
\hat{h}_{\text{s}}(\phi)\geq h_{\text{s}} (\phi) \,\,\,\ \text{and} \,\,\,\ \hat{h}_{\text{r}}(\phi)\geq h_{\text{r}}(\phi).\end{eqnarray*}
\end{lema}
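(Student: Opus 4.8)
The plan is to show that, thanks to continuity and compactness, the hat-balls $\hat{B}(x,\phi,T,\varepsilon,\delta)$ and the classical Bowen balls $B(x,\phi,T,\varepsilon)$ are interchangeable up to a fixed loss in the radius $\varepsilon$, provided $\delta$ is taken small. Combined with Lemma \ref{A1}, this yields the equalities in Theorem \ref{A}.

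First I would isolate the one place where continuity enters: \emph{uniform short-time continuity}. Since $\phi$ is continuous and $[0,1]\times X$ is compact, the map $(s,z)\mapsto d(\phi_s(z),z)$ is uniformly continuous and vanishes identically at $s=0$; hence for every $\kappa>0$ there is $\delta_0>0$ such that $d(\phi_s(z),z)<\kappa$ for all $z\in X$ and all $s\in[0,\delta_0)$. This is the engine of the whole argument, and it is crucial that $\delta_0$ depends only on $\kappa$ and not on the (unbounded) time horizon $T$; otherwise the estimates below would degrade as $T\to\infty$.

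Fix $\varepsilon>0$ and apply the previous step with $\kappa=\varepsilon/4$ to get a threshold $\delta_0=\delta_0(\varepsilon)$. For the spanning inequality I claim that $\hat{B}(x,\phi,T,\varepsilon/2,\delta)\subseteq B(x,\phi,T,\varepsilon)$ whenever $0<\delta<\delta_0$. Indeed, if $y$ lies in the left-hand set then for each $t\in[0,T]$ there is $s\in[0,\delta)$ with $d(\phi_{t+s}(x),\phi_{t+s}(y))<\varepsilon/2$; writing $\phi_{t+s}=\phi_s\circ\phi_t$ and inserting $\phi_t(x),\phi_t(y)$ via the triangle inequality, the two short-time terms are each $<\varepsilon/4$, so $d(\phi_t(x),\phi_t(y))<\varepsilon$. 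Consequently any $(\phi,T,\varepsilon/2,\delta)$-spanning set is $(\phi,T,\varepsilon)$-spanning, giving $\hat{r}(\phi,T,\varepsilon/2,\delta)\geq r(\phi,T,\varepsilon)$. The separated inequality is dual: if $E$ is $(\phi,T,\varepsilon)$-separated, then for distinct $x,x'\in E$ there is $t_0\in[0,T]$ with $d(\phi_{t_0}(x),\phi_{t_0}(x'))\geq\varepsilon$, and the same short-time estimate forces $d(\phi_{t_0+s}(x),\phi_{t_0+s}(x'))>\varepsilon/2$ for every $s\in[0,\delta)$, whence $d_{\delta}^{\phi}(\phi_{t_0}(x),\phi_{t_0}(x'))\geq\varepsilon/2$ and $x'\notin\hat{B}(x,\phi,T,\varepsilon/2,\delta)$; thus $E$ is $(\phi,T,\varepsilon/2,\delta)$-separated and $\hat{s}(\phi,T,\varepsilon/2,\delta)\geq s(\phi,T,\varepsilon)$.

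Finally I would pass to the limits. Taking $\limsup_{T\to\infty}\tfrac{1}{T}\log(\cdot)$ in both inequalities gives $\hat{h}_{\text{s}}(\phi,\varepsilon/2,\delta)\geq\limsup_{T}\tfrac{1}{T}\log s(\phi,T,\varepsilon)$ and likewise for $r$, valid for all $\delta<\delta_0(\varepsilon)$. The one delicate point is the order of limits: in $\hat{h}_{\text{s}}(\phi)$ the parameter $\delta$ is sent to $0$ \emph{outermost}, while the estimate only holds for $\delta$ below the $\varepsilon$-dependent threshold $\delta_0(\varepsilon)$. This is resolved by monotonicity: since $\delta\mapsto\hat{h}_{\text{s}}(\phi,\delta)$ and $\varepsilon\mapsto\hat{h}_{\text{s}}(\phi,\varepsilon,\delta)$ are nonincreasing, the relevant limits are suprema, so for each fixed $\varepsilon$ one may first choose $\delta<\delta_0(\varepsilon)$ to obtain $\hat{h}_{\text{s}}(\phi)\geq\hat{h}_{\text{s}}(\phi,\delta)\geq\hat{h}_{\text{s}}(\phi,\varepsilon/2,\delta)\geq\limsup_{T}\tfrac{1}{T}\log s(\phi,T,\varepsilon)$, and then let $\varepsilon\to0$ to conclude $\hat{h}_{\text{s}}(\phi)\geq h_{\text{s}}(\phi)$; the identical reasoning gives $\hat{h}_{\text{r}}(\phi)\geq h_{\text{r}}(\phi)$. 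I expect the handling of this nested-limit bookkeeping, rather than any single estimate, to be the main subtlety of the proof.
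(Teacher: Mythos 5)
Your proposal is correct and follows essentially the same route as the paper: both rest on uniform short-time continuity of $\phi$ (from compactness of $X$) to obtain the ball inclusion $\hat{B}(x,\phi,T,\varepsilon',\delta)\subseteq B(x,\phi,T,\varepsilon)$ for suitably related radii and small $\delta$, then transfer this to separated/spanning cardinalities and pass to the limits. Your explicit treatment of the nested-limit order via monotonicity is more detailed than the paper's terse ``taking limits,'' but the substance is identical.
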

\begin{proof}
Since $\phi$ is a continuous  and $X$ is a compact metric space, for all $\alpha>0$ there exists $\beta=\beta(\alpha)>0$ such that for all $z\in X$ and $t\geq 0$, we have
$$u\in [t,t+\beta]\Rightarrow d(\phi_t(z),\phi_u(z))<\alpha/4.$$
This implies that for any $ x\in X$,  $\delta>0$ with $\delta<\beta$,  $\varepsilon>0$ with $\varepsilon<\alpha/2$ and $T>0$, we have
$$
\hat{B}(x,\phi,T,\varepsilon,\delta)\subseteq B(x,\phi,T,\alpha).
$$
Indeed, if $y\notin B(x,\phi,T,\alpha)$, then there exists $t\in[0,T]$ such that
\begin{eqnarray*}\label{A11}
d(\phi_t(x),\phi_t(y))\geq \alpha.
\end{eqnarray*}
By the triangle inequality, we have  
$$d(\phi_t(x),\phi_t(y))\leq d(\phi_t(x),\phi_u(x))+d(\phi_u(x),\phi_u(y))+d(\phi_u(y),\phi_t(y))$$
for all $u\in(t,t+\beta)$, hence
$$d(\phi_u(x),\phi_u(y))>\alpha/2.$$
Since $\beta>\delta$ and $\varepsilon<\alpha/2$, we have $y\notin \hat{B}(x,\phi,T,\varepsilon,\delta)$. This implies that if $E$ is $(\phi,T,\alpha)$-separated, then $E$ is $(\phi,T,\varepsilon,\delta)$-separated. On the other hand,  if $F$ is $(\phi,T,\varepsilon,\delta)$-spanning, then $F$ is $(\phi,T,\alpha)$-spanning. Therefore
$$\hat{s}(\phi,T,\varepsilon,\delta)\geq s(\phi,T,\alpha)$$
and
$$\hat{r}(\phi,T,\varepsilon,\delta)\geq r(\phi,T,\alpha).$$
Thus
$$\frac{1}{T}\displaystyle\log \hat{s}(\phi,T,\varepsilon,\delta)\geq \frac{1}{T}\displaystyle\log s(\phi,T,\alpha)$$
and
$$\frac{1}{T}\displaystyle\log \hat{r}(\phi,T,\varepsilon,\delta)\geq \frac{1}{T}\displaystyle\log r(\phi,T,\alpha).$$
Taking limits we obtain the desired inequalities.
\end{proof}

\begin{proof}[ Proof of Theorem \ref{A}]
By Bowen  (\cite{bowen}),  we have $h_s(\phi)=h_r(\phi)$ and so, by Lemmas \ref{A1} and \ref{A2}, we get the desired result.
\end{proof}

\section{Proof of Theorem \ref{B}}\label{4}

Theorem \ref{B} will follow from the following two lemmas.

\begin{lema}\label{B2} Let $\phi:\mathbb{R}_0^+\times X\to X$ be a semiflow and $\tau$ an admissible function on $X$. Then
$$\hat{h}_{\text{s}}(\phi)\leq h_{\text{top}}^{\tau}(\phi).$$
\end{lema}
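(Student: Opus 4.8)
The plan is to compare the two families of separated sets directly, by showing that for appropriately matched parameters each hatted dynamical ball contains the corresponding $\tau$-dynamical ball; separation in the hatted sense is then inherited in the $\tau$-sense, which reverses into the desired inequality of growth rates. Write $\gamma>0$ for the admissibility constant of $\tau$. The core claim I would establish is the inclusion
$$B^{\tau}(x,\phi,T+\delta,\varepsilon,\rho)\subseteq \hat{B}(x,\phi,T,\varepsilon,\delta),$$
valid for every $x\in X$ and every $T>0$ as soon as $\rho<\gamma/2$ and $\delta$ exceeds a fixed multiple of $\rho$ (say $\delta>3\rho$), where I take the two radii equal, $\epsilon=\varepsilon$.

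To prove the inclusion, fix $y$ in the left-hand ball, so that $d(\phi_u(x),\phi_u(y))<\varepsilon$ for every $u\in J_{T+\delta,\rho}^{\tau}(x)$. Since $d_{\delta}^{\phi}(\phi_t(x),\phi_t(y))=\inf_{s\in[0,\delta)}d(\phi_{t+s}(x),\phi_{t+s}(y))$, to conclude $y\in\hat{B}(x,\phi,T,\varepsilon,\delta)$ it suffices to produce, for each $t\in[0,T]$, a value $s\in[0,\delta)$ with $t+s\in J_{T+\delta,\rho}^{\tau}(x)$; the bound $t+s\leq T+\delta$ guarantees $t+s$ lies in the admissible range. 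This reduces everything to the geometric fact that the interval $[t,t+\delta)$ always meets $J_{T+\delta,\rho}^{\tau}(x)$. Here lies the heart of the argument: the complement of $J_{T+\delta,\rho}^{\tau}(x)$ in $(0,T+\delta]$ is the union of the open intervals $(\tau_j(x)-\rho,\tau_j(x)+\rho)$, each of length $2\rho$, and admissibility condition (2) gives $\tau_{j+1}(x)-\tau_j(x)\geq\gamma>2\rho$, so consecutive such intervals (for $j\geq1$) are disjoint and separated by subintervals of $J$. Consequently this union contains no subinterval longer than a fixed multiple of $\rho$, and an interval of length $\delta$ exceeding that multiple cannot be swallowed by it; the only delicate spot is the interval removed around $\tau_0(x)=0$, which may merge with the one around $\tau_1(x)$, but the resulting exceptional interval has length at most $\tau_1(x)+\rho<3\rho$ in that case, whence the choice $\delta>3\rho$ forces $[t,t+\delta)\cap J_{T+\delta,\rho}^{\tau}(x)\neq\emptyset$ for all $t\in[0,T]$.

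Granting the inclusion, the separated-set comparison is immediate: if $E$ is $(\phi,T,\varepsilon,\delta)$-separated then for each $x\in E$ one has $E\cap B^{\tau}(x,\phi,T+\delta,\varepsilon,\rho)\subseteq E\cap\hat{B}(x,\phi,T,\varepsilon,\delta)=\{x\}$, so $E$ is $(\phi,\tau,T+\delta,\varepsilon,\rho)$-separated and hence $\hat{s}(\phi,T,\varepsilon,\delta)\leq s^{\tau}(\phi,T+\delta,\varepsilon,\rho)$. Passing to $\frac{1}{T}\log(\cdot)$ and $\limsup_{T\to+\infty}$, the shift $T\mapsto T+\delta$ is absorbed because $\frac{T}{T+\delta}\to1$, yielding
$$\hat{h}_{\text{s}}(\phi,\varepsilon,\delta)\leq\limsup_{T\to+\infty}\frac{1}{T}\log s^{\tau}(\phi,T,\varepsilon,\rho)$$
for all $\varepsilon=\epsilon>0$ and all $\rho$ with $3\rho<\delta$ and $\rho<\gamma/2$.

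The last step is to take the limits in the order imposed by the two definitions, using the monotonicities already recorded in the text. Fixing $\delta$ and letting $\varepsilon=\epsilon\to0^+$ gives $\hat{h}_{\text{s}}(\phi,\delta)\leq\lim_{\epsilon\to0^+}\limsup_{T\to+\infty}\frac{1}{T}\log s^{\tau}(\phi,T,\epsilon,\rho)$ for every $\rho<\delta/3$; the right-hand side is nonincreasing in $\rho$ and converges to $h_{\text{top}}^{\tau}(\phi)$ as $\rho\to0^+$, so it is bounded above by $h_{\text{top}}^{\tau}(\phi)$, and therefore $\hat{h}_{\text{s}}(\phi,\delta)\leq h_{\text{top}}^{\tau}(\phi)$ for every $\delta>0$. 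Letting $\delta\to0^+$ yields $\hat{h}_{\text{s}}(\phi)\leq h_{\text{top}}^{\tau}(\phi)$. I expect the main obstacle to be precisely the geometric step, namely establishing uniformly in $x$ that $[t,t+\delta)$ meets $J_{T+\delta,\rho}^{\tau}(x)$ despite the special removed interval at the origin, together with the careful bookkeeping of the four parameters $T,\varepsilon,\delta,\rho$ so that the limits can be exchanged in the prescribed order.
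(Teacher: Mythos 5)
Your proof is correct and follows essentially the same route as the paper: establish the inclusion $B^{\tau}(x,\phi,\cdot,\varepsilon,\rho)\subseteq\hat{B}(x,\phi,T,\varepsilon,\delta)$ by observing that the removed intervals around the $\tau_j(x)$ cannot swallow an interval of length $\delta$, transfer separation from the hatted notion to the $\tau$-notion, and pass to the limits in the prescribed order. You are in fact somewhat more careful than the paper at the two boundary effects (the possible merging of the removed intervals at $\tau_0(x)=0$ and $\tau_1(x)$, handled by $\delta>3\rho$, and the truncation of $[t,t+\delta)$ at the right endpoint, handled by comparing with $s^{\tau}(\phi,T+\delta,\varepsilon,\rho)$), whereas the paper works with $\delta>2\rho$ and the same $T$ on both sides; these refinements do not change the argument's substance.
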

\begin{proof}
Let us consider $\gamma>0$ as in the definition of admissible function. Fix $\rho>0$ with $\rho<\gamma/2$, $\delta>2\rho$, $\varepsilon>0$ and $T>0$. Notice that for all $x\in X$ we have
$$ B^{\tau}(x,\phi,T,\varepsilon, \rho)\subseteq \hat{B}(x,\phi,T,\varepsilon,\delta).$$
Indeed, suppose $y\notin \hat{B}(x,\phi,T,\varepsilon, \delta)$. Then there exists $t\in[0,T]$ such that for all $s\in[t,t+\delta)$
$$d(\phi_{s}(x),\phi_{s}(y))\geq\varepsilon.$$
Since $\delta>2\rho$, we have  $J_{T,\rho}^{\tau}(x)\cap[t,t+\delta]\neq\emptyset$, so there exists $s \in J_{T,\rho}^{\tau}(x)$ such that $d(\phi_s(x), \phi_s(y)) \geq \varepsilon$,  hence $y\notin B^{\tau}(x,\phi,T,\varepsilon, \rho)$. This implies that if  $E$ is $(\phi,T,\varepsilon,\delta)$-separated, then $E$ is $(\phi,\tau,T,\varepsilon,\rho)$-separated. Therefore
$$\hat{s}(\phi,T,\varepsilon,\delta)\leq s^{\tau}(\phi,T,\varepsilon,\rho),$$
hence
$$\frac{1}{T}\log \hat{s}(\phi,T,\varepsilon,\delta)\leq \frac{1}{T}\log s^{\tau}(\phi,T,\varepsilon,\rho).$$
Taking limits, we obtain the desired inequality. 
\end{proof}
\begin{lema}\label{B1} Let $\phi:\mathbb{R}_0^+\times X\to X$ be a semiflow and $\tau$ an admissible function on $X$. Then
$$\hat{h}_{\text{r}}(\phi)\leq \hat{h}_{\text{s}}(\phi).$$
\end{lema}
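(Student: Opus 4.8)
The plan is to prove the general inequality $\hat{h}_{\text{r}}(\phi)\leq \hat{h}_{\text{s}}(\phi)$ by establishing it at the finite level, that is, by comparing the spanning number $\hat{r}(\phi,T,\varepsilon,\delta)$ with the separated number $\hat{s}(\phi,T,\varepsilon,\delta)$ for each fixed $T,\varepsilon,\delta$. This mirrors the classical Bowen argument for continuous flows, where one shows $r\le s$ via a maximality argument: a separated set that is maximal (with respect to inclusion) is automatically spanning. The key point is that this half of the Bowen comparison uses only the definitions of the dynamical balls and not any continuity of $\phi$, so it should go through verbatim in the discontinuous setting once one is careful with the pseudosemimetric $d^{\phi}_{\delta}$.

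First I would fix $T>0$, $\varepsilon>0$ and $\delta>0$, and take a $(\phi,T,\varepsilon,\delta)$-separated set $E$ that is maximal in the sense that no proper superset is separated. If $\hat{s}(\phi,T,\varepsilon,\delta)=\infty$ there is nothing to prove, so assume it is finite; then a maximal separated set $E$ exists with $|E|=\hat{s}(\phi,T,\varepsilon,\delta)$. The claim is that $E$ is $(\phi,T,\varepsilon,\delta)$-spanning. Indeed, for any $z\in X$, maximality forces $E\cup\{z\}$ to fail to be separated, which means there is some $x\in E$ with $x\ne z$ such that either $z\in\hat{B}(x,\phi,T,\varepsilon,\delta)$ or $x\in\hat{B}(z,\phi,T,\varepsilon,\delta)$. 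The step requiring care is to conclude from this that $z$ lies in $\bigcup_{x\in E}\hat{B}(x,\phi,T,\varepsilon,\delta)$. This is immediate in the first case; in the second case one must check that $x\in\hat{B}(z,\cdot)$ forces $z\in\hat{B}(x,\cdot)$, i.e. one needs the defining relation of $\hat{B}$ to be symmetric in its two points.

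The hard part will be precisely this symmetry. The ball $\hat{B}(x,\phi,T,\varepsilon,\delta)$ is defined through $d^{\phi}_{\delta}(\phi_t(x),\phi_t(y))<\varepsilon$ for all $t\in[0,T]$, and since $d^{\phi}_{\delta}$ is a pseudosemimetric it is symmetric by definition: $d^{\phi}_{\delta}(u,v)=d^{\phi}_{\delta}(v,u)$. Consequently the condition defining $y\in\hat{B}(x,\cdot)$ is literally the same as the condition defining $x\in\hat{B}(y,\cdot)$, so the membership relation is symmetric and the two cases above collapse into one. Thus a maximal separated set is spanning, giving $\hat{r}(\phi,T,\varepsilon,\delta)\le|E|=\hat{s}(\phi,T,\varepsilon,\delta)$.

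Finally I would pass to the growth rates and the limits. Taking $\frac{1}{T}\log$ of the inequality $\hat{r}(\phi,T,\varepsilon,\delta)\le\hat{s}(\phi,T,\varepsilon,\delta)$ and letting $T\to+\infty$ gives $\hat{h}_{\text{r}}(\phi,\varepsilon,\delta)\le\hat{h}_{\text{s}}(\phi,\varepsilon,\delta)$; then sending $\varepsilon\to0^+$ and afterwards $\delta\to0^+$ yields $\hat{h}_{\text{r}}(\phi)\le\hat{h}_{\text{s}}(\phi)$. One should note that the inequality and all subsequent limits respect the convention $\log\infty=\infty$, so the argument is valid even when the spanning or separated numbers are infinite. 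With Lemma \ref{B2} this completes the chain $\hat{h}_{\text{r}}(\phi)\le\hat{h}_{\text{s}}(\phi)\le h^{\tau}_{\text{top}}(\phi)$ of Theorem \ref{B}.
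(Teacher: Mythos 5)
Your proposal is correct and follows essentially the same route as the paper: take a maximal $(\phi,T,\varepsilon,\delta)$-separated set, observe that maximality forces it to be spanning (the symmetry of the pseudosemimetric $d^{\phi}_{\delta}$ makes the membership relation $y\in\hat{B}(x,\phi,T,\varepsilon,\delta)$ symmetric, exactly as you note), and then pass to logarithms and limits. The only cosmetic difference is that the paper invokes Zorn's Lemma to produce the maximal separated set rather than splitting on whether $\hat{s}(\phi,T,\varepsilon,\delta)$ is finite; both are valid.
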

\begin{proof}
Let us fix $\delta>0$, $\varepsilon >0$ and $T>0$. Since the union of a partially ordered (by set inclusion) family of separated sets is separated, we can take, by Zorn's Lemma, a maximal $(\phi, T,\varepsilon, \delta)$-separated subset $E$. We claim that $E$ is $(\phi, T,\varepsilon, \delta)$-spanning. Indeed, suppose that  $E$ is not $(\phi,T,\varepsilon,\delta)$-spanning. Then there exists $x\in X$ such that for all $y\in E$, with $y\neq x$, there exists $t\in[0,T]$ such that
$$d_{\delta}^{\phi}(\phi_t(x),\phi_t(y))\geq \varepsilon.$$
Therefore $E\cup\{x\}$ is a $(\phi,T,\varepsilon,\delta)$ separated set, which contradicts  the maximality condition of $E$.  This implies
$$\hat{r}(\phi,T,\varepsilon,\delta)\leq |E|\leq \hat{s}(\phi,T,\varepsilon,\delta),$$
hence
$$\frac{1}{T}\log \hat{r}(\phi,T,\varepsilon,\delta)\leq \frac{1}{T}\log \hat{s}(\phi,T,\varepsilon,\delta).$$
Taking limits, we obtain the desired inequality.
\end{proof}
\begin{proof}[Proof of Theorem \ref{B}]
By Lemma \ref{B1} and Lemma \ref{B2}, we have the desired result.
\end{proof}

\section{Proof of Theorem \ref{F}}\label{5}
For the proof of our third result  we need some technical lemmas. 
\begin{lema}\label{semiconj}
\begin{upshape}
Let us consider $\phi$ and $\tilde{\phi}$  two semiflows on the metric spaces $(X,d)$ and $(\tilde{X},\tilde{d})$, respectively. If $H:X\to \tilde{X}$ is a uniformly continuous bijection such that for all $t\geq0$
\begin{eqnarray}\label{semiconjugation}
\tilde{\phi}_t\circ H= H\circ \phi_t,
\end{eqnarray}
then
$$\hat{h}_{\text{s}}(\tilde{\phi})\leq\hat{h}_{\text{s}}(\phi)  \,\,\ \text{and} \,\,\  \hat{h}_{\text{r}}(\tilde{\phi})\leq \hat{h}_{\text{r}}(\phi).$$
\end{upshape}
\end{lema}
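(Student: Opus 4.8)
The plan is to transfer separated and spanning sets between the two systems using $H$ and its inverse, controlling the metric distortion via uniform continuity and commuting $H$ with the dynamics via the semiconjugacy \eqref{semiconjugation}. First I would record the key consequence of uniform continuity: given $\tilde{\varepsilon}>0$ there exists $\varepsilon=\varepsilon(\tilde{\varepsilon})>0$ such that $d(a,b)<\varepsilon$ implies $\tilde{d}(H(a),H(b))<\tilde{\varepsilon}$, equivalently $\tilde{d}(H(a),H(b))\geq\tilde{\varepsilon}$ implies $d(a,b)\geq\varepsilon$. Since $\phi_s\circ\phi_t=\phi_{s+t}$ and likewise for $\tilde{\phi}$, relation \eqref{semiconjugation} gives $\tilde{\phi}_{s+t}(H(x))=H(\phi_{s+t}(x))$ for all $s,t\geq0$ and $x\in X$, so the two pseudosemimetrics are linked by comparing $d(\phi_{s+t}(x),\phi_{s+t}(y))$ with $\tilde{d}(H(\phi_{s+t}(x)),H(\phi_{s+t}(y)))$ over $s\in[0,\delta)$, using the \emph{same} $\delta$ on both sides.

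The core step is to show, for fixed $\delta>0$, $\tilde{\varepsilon}>0$ and $T>0$ with $\varepsilon=\varepsilon(\tilde{\varepsilon})$ as above, that whenever $E\subseteq\tilde{X}$ is $(\tilde{\phi},T,\tilde{\varepsilon},\delta)$-separated, its preimage $H^{-1}(E)$ is $(\phi,T,\varepsilon,\delta)$-separated. Indeed, for distinct $\tilde{x},\tilde{y}\in E$ there is $t\in[0,T]$ with $\tilde{d}^{\tilde{\phi}}_{\delta}(\tilde{\phi}_t(\tilde{x}),\tilde{\phi}_t(\tilde{y}))\geq\tilde{\varepsilon}$; writing $x=H^{-1}(\tilde{x})$, $y=H^{-1}(\tilde{y})$ and using the first paragraph, this forces $\tilde{d}(H(\phi_{s+t}(x)),H(\phi_{s+t}(y)))\geq\tilde{\varepsilon}$ for \emph{every} $s\in[0,\delta)$, whence $d(\phi_{s+t}(x),\phi_{s+t}(y))\geq\varepsilon$ for every such $s$, so $d^{\phi}_{\delta}(\phi_t(x),\phi_t(y))\geq\varepsilon$ and $y\notin\hat{B}(x,\phi,T,\varepsilon,\delta)$. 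As $H$ is a bijection, $|H^{-1}(E)|=|E|$, giving $\hat{s}(\tilde{\phi},T,\tilde{\varepsilon},\delta)\leq\hat{s}(\phi,T,\varepsilon,\delta)$. The spanning case is dual: if $F\subseteq X$ is $(\phi,T,\varepsilon,\delta)$-spanning, then $H(F)$ is $(\tilde{\phi},T,\tilde{\varepsilon},\delta)$-spanning, because the strict inequality $d^{\phi}_{\delta}(\phi_t(x),\phi_t(y))<\varepsilon$ is witnessed by some admissible $s\in[0,\delta)$ that uniform continuity transports forward; this yields $\hat{r}(\tilde{\phi},T,\tilde{\varepsilon},\delta)\leq\hat{r}(\phi,T,\varepsilon,\delta)$.

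It then remains to pass to the limits in the correct order. Taking $\tfrac{1}{T}\log$ and $\limsup_{T\to+\infty}$ yields $\hat{h}_{\text{s}}(\tilde{\phi},\tilde{\varepsilon},\delta)\leq\hat{h}_{\text{s}}(\phi,\varepsilon,\delta)$ and $\hat{h}_{\text{r}}(\tilde{\phi},\tilde{\varepsilon},\delta)\leq\hat{h}_{\text{r}}(\phi,\varepsilon,\delta)$. Since $\varepsilon\mapsto\hat{h}_{\text{s}}(\phi,\varepsilon,\delta)$ is nonincreasing, its values are bounded above by $\hat{h}_{\text{s}}(\phi,\delta)$, so $\hat{h}_{\text{s}}(\tilde{\phi},\tilde{\varepsilon},\delta)\leq\hat{h}_{\text{s}}(\phi,\delta)$ for every $\tilde{\varepsilon}$; letting $\tilde{\varepsilon}\to0^+$ gives $\hat{h}_{\text{s}}(\tilde{\phi},\delta)\leq\hat{h}_{\text{s}}(\phi,\delta)$, and letting $\delta\to0^+$ gives $\hat{h}_{\text{s}}(\tilde{\phi})\leq\hat{h}_{\text{s}}(\phi)$. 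The identical argument applied to $\hat{h}_{\text{r}}$ finishes the proof.

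The main point requiring care, rather than genuine difficulty, is the interplay between the $\varepsilon$–$\tilde{\varepsilon}$ dependence coming from uniform continuity and the nested limits defining the entropies: because $\varepsilon$ is determined by $\tilde{\varepsilon}$ and need not shrink to $0$ in any controlled fashion, one cannot compare the two $\varepsilon$-limits directly and must instead exploit the monotonicity of $\varepsilon\mapsto\hat{h}_{\text{s}}(\phi,\varepsilon,\delta)$ to absorb $\varepsilon$ into $\hat{h}_{\text{s}}(\phi,\delta)$ before sending $\tilde{\varepsilon}\to0^+$. A secondary subtlety is the handling of the infimum in $d^{\phi}_{\delta}$: for separated sets one uses that $\inf_{s}\geq\tilde{\varepsilon}$ forces every term to be $\geq\tilde{\varepsilon}$, whereas for spanning sets one uses that $\inf_{s}<\varepsilon$ is realized (at least approximately) at some $s\in[0,\delta)$, and both manipulations are legitimate precisely because \eqref{semiconjugation} holds for all parameter values.
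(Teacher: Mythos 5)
Your proposal is correct and follows essentially the same route as the paper: use uniform continuity of $H$ to produce $\varepsilon=\varepsilon(\tilde{\varepsilon})$, pull separated sets back through $H^{-1}$ and push spanning sets forward through $H$ via the semiconjugacy, and then pass to the limits. Your explicit use of the monotonicity of $\varepsilon\mapsto\hat{h}_{\text{s}}(\phi,\varepsilon,\delta)$ to handle the uncontrolled dependence of $\varepsilon$ on $\tilde{\varepsilon}$ is a slightly more careful rendering of the paper's parenthetical ``noting that $\beta\to0^+$ when $\varepsilon\to0^+$,'' but it is the same argument.
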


\begin{proof}

Let $\varepsilon>0$. Since $H$ is uniformly continuous, there exists $\beta(\varepsilon)=\beta>0$ such that for all $x,y\in X$, we have
$$d(x,y)<\beta \Rightarrow \tilde{d}(H(x),H(y))<\varepsilon.$$
Let us consider $\delta>0$, $T>0$ and  $\tilde{E}\subset\tilde{X}$ a $(\tilde{\phi},T,\varepsilon,\delta)$-separated subset. We claim that $ E= H^{-1}(\tilde{E})$ is $(\phi,T,\beta,\delta)$-separated.
Indeed, given $x\neq y$ in $E$ we have $H(x)\neq H(y)$, hence there exists $t\in[0,T]$ such that
$$\tilde{d}_{\delta}^{\tilde{\phi}}(\tilde{\phi}_t(H(x)),\tilde{\phi}_t(H(y)))\geq\varepsilon.$$ By \eqref{semiconjugation} we have $\tilde{d}_{\delta}^{\phi}(H(\phi_t(x)),H(\phi_t(y)))\geq\varepsilon$,
thus
$$ d_{\delta}^{\phi}(\phi_t(x),\phi_t(y))\geq\beta.$$
This proves that $E$ is $(\phi,T,\beta,\delta)$-separated. Since 
$|\tilde{E}|=|E|$, 
we have
$$\hat{s}(\tilde{\phi},T,\varepsilon,\delta)\leq \hat{s}(\phi,T,\beta,\delta).$$
Taking logarithms and limits (noting that that $\beta\to0^+$ when $\varepsilon\to0^+$) we deduce the first inequality. Now, let
$F\subset X$ be a $(\phi,T,\beta,\delta)$-spanning subset. We claim that $H(F)$ is  $(\tilde{\phi},T,\varepsilon,\delta)$-spanning. Indeed,  for all $\tilde{x}\in \tilde{X}$ there exists $y\in F$ such that for all $t\in[0,T]$ we have 
$d_{\delta}^{\phi}(\phi_t(H^{-1}(\tilde{x})),\phi_t(y))<\beta $. This implies $$d_{\delta}^{\phi}(H(\phi_t(H^{-1}(\tilde{x}))),H(\phi_t(y)))<\varepsilon,$$
and so, by \eqref{semiconjugation}, we deduce
$$\tilde{d}_{\delta}^{\tilde{\phi}}(\tilde{\phi}_t (\tilde{x}),\tilde{\phi}_t( H(y)))<\varepsilon.$$
This proves that $H(F)$ is $(\tilde{\phi},T,\varepsilon,\delta)$-spanning. Since
$|H(F)|=|F| $
we have 
$$ \hat{r}(\tilde{\phi},T,\varepsilon,\delta) \leq \hat{r}(\phi,T,\beta,\delta).$$
Again, taking logarithms and limits (noting that that $\beta\to0^+$ when $\varepsilon\to0^+$) we deduce the second inequality.
\end{proof} 
Let us now consider an impulsive dynamical system $(X,\varphi,D,I)$. Observe that if  $I(D)\cap D=\emptyset$, then one can find $\eta>0$ small enough satisfying $I(D)\cap \overline{D}_{\eta}=\emptyset$.  Define $\tau^*:X\to \mathbb{R}^+\cup\{\infty\}$   by
$$\tau^*(x)=\left\{\begin{array}{lll}\tau_1(x) & \mbox{ if }& x\notin D,\\
0 & \mbox{ if } & x\in D.
\end{array}\right.$$

\begin{lema}\label{C}
\begin{upshape} Let $(X,\varphi, D,I)$ be an impulsive dynamical system. Assume that $I(D)\cap D=\emptyset$,  $D_{\xi}$ is open for some $\xi>0$ and $\varphi_{\xi}(D_{\xi})\subset X_{\xi}$. Then, we have the following properties:
\begin{enumerate}
\item For all $x \in X_{\xi}$ and $t>0$ such that $\varphi_t(x)\in D_{\xi}$, there exists $\tau<t$ such that $\varphi_{\tau}(x)\in D$.\label{CC1}
\item $\tau^*$ is continuous on $X_\xi \cup D$.\label{CC2}
\item  If $\phi$ is the  semiflow associated to $(X,\varphi,D,I)$, then for all $t\geq 0$ we have $\phi_t(X_{\xi})\subset X_{\xi}$ . \label{CC3}
\end{enumerate}
\end{upshape}
\end{lema}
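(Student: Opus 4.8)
The plan is to prove \eqref{CC1} first, since it is the mechanism that drives the other two parts, and then to read off \eqref{CC2} and \eqref{CC3} from it together with the standing hypotheses. As a preliminary observation I would record that, by joint continuity of $\varphi$ and compactness of $D$ and of $[0,\xi]$, every limit of points $\varphi_{r_n}(z_n)$ with $z_n\in D$, $r_n\in(0,\xi)$ is of the form $\varphi_{r}(z)$ with $z\in D$, $r\in[0,\xi]$; hence
$$\overline{D_\xi}\subseteq\{\varphi_r(z):z\in D,\ 0\le r\le\xi\}=D\cup D_\xi\cup\varphi_\xi(D),$$
and since $D_\xi$ is open, $\partial D_\xi=\overline{D_\xi}\setminus D_\xi\subseteq D\cup\varphi_\xi(D)$. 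To prove \eqref{CC1}, let $x\in X_\xi$ and $t>0$ with $\varphi_t(x)\in D_\xi$. As $D_\xi$ is open and $s\mapsto\varphi_s(x)$ is continuous, the set $A=\{s\in[0,t]:\varphi_u(x)\in D_\xi\text{ for all }u\in(s,t]\}$ is a nonempty subinterval of $[0,t]$ with right endpoint $t$; put $c=\inf A<t$. Then $\varphi_u(x)\in D_\xi$ for $u\in(c,t]$, while $\varphi_c(x)\in\overline{D_\xi}\setminus D_\xi$ (openness would otherwise let me decrease $c$, and if $c=0$ then $\varphi_0(x)=x\in X_\xi$ is not in $D_\xi$). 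Thus $\varphi_c(x)\in\partial D_\xi\subseteq D\cup\varphi_\xi(D)$. If $\varphi_c(x)\in D$, then $\tau=c<t$ is the desired time. The remaining case $\varphi_c(x)=\varphi_\xi(w)$ with $w\in D$ I would eliminate using $\varphi_\xi(D_\xi)\subseteq X_\xi$: for small $\delta\in(0,\xi)$ with $c+\delta\le t$ one has $\varphi_{c+\delta}(x)\in D_\xi$, but, since $\varphi_\delta(w)\in D_\xi$,
$$\varphi_{c+\delta}(x)=\varphi_\delta(\varphi_\xi(w))=\varphi_\xi(\varphi_\delta(w))\in\varphi_\xi(D_\xi)\subseteq X_\xi,$$
contradicting $X_\xi\cap D_\xi=\emptyset$. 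Hence $\varphi_c(x)\in D$ and \eqref{CC1} follows.

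For \eqref{CC2}, recall that $\tau_1$ is lower semicontinuous on $X\setminus D$, so $\tau^*$ is lower semicontinuous on $X_\xi\cup D$ (it coincides with $\tau_1$ on $X_\xi$ and takes its minimal value $0$ on $D$); it remains to prove upper semicontinuity. Because $X\setminus D$ is open, a point of $X_\xi$ has a neighbourhood meeting no point of $D$, so within $X_\xi\cup D$ I only need to test sequences in $X_\xi$. At $x_0\in X_\xi$ with $t_0=\tau_1(x_0)<\infty$ I would fix $s\in(0,\xi)$ and note $\varphi_{t_0+s}(x_0)=\varphi_s(\varphi_{t_0}(x_0))\in D_\xi$; by joint continuity $\varphi_{t_0+s}(x)\in D_\xi$ for every $x\in X_\xi$ near $x_0$, whence \eqref{CC1} yields $\tau_1(x)<t_0+s$. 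Letting $s\to0^+$ gives $\limsup_{x\to x_0}\tau_1(x)\le t_0$, i.e. continuity at $x_0$ (the case $t_0=\infty$ is covered by lower semicontinuity alone). At $x_0\in D$, where $\tau^*(x_0)=0$, the same argument applied to $\varphi_s(x_0)\in D_\xi$ for $s\in(0,\xi)$ gives $\tau_1(x)<s$ for $x\in X_\xi$ near $x_0$ and every such $s$, hence $\tau^*(x)\to0$.

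For \eqref{CC3}, the engine is the following sublemma: \emph{if $q\in X_\xi$ and $0\le u<\tau_1(q)$, then $\varphi_u(q)\in X_\xi$.} Indeed $\varphi_u(q)\notin D$ by definition of $\tau_1(q)$, and if $\varphi_u(q)\in D_\xi$ (which forces $u>0$), then \eqref{CC1} would produce $\tau<u$ with $\varphi_\tau(q)\in D$, contradicting $u<\tau_1(q)$. I would first record $I(D)\subseteq X_\xi$: one has $I(D)\cap D=\emptyset$ by hypothesis and, for the $\xi$ under consideration, $I(D)\cap\overline{D_\xi}=\emptyset$ (the standing smallness of $\xi$ noted before the lemma), so $I(D)\cap D_\xi=\emptyset$. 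Now write $[0,\infty)=\bigcup_{n\ge0}[\tau_n(x),\tau_{n+1}(x))$ with $\tau_0(x)=0$, valid because $T(x)=\infty$. On $[0,\tau_1(x))$ one has $\phi_t(x)=\varphi_t(x)\in X_\xi$ by the sublemma with $q=x$; on $[\tau_n(x),\tau_{n+1}(x))$ one has $\phi_t(x)=\varphi_{t-\tau_n(x)}(I(x^n))$ with $I(x^n)\in I(D)\subseteq X_\xi$ and $0\le t-\tau_n(x)<\tau_1(I(x^n))$, so again $\phi_t(x)\in X_\xi$. Hence $\phi_t(X_\xi)\subseteq X_\xi$ for all $t\ge0$.

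The main obstacle is \eqref{CC1}, and within it the non-invertibility of $\varphi$: from the representation $\varphi_t(x)=\varphi_r(z)$ one cannot simply pull back to conclude $\varphi_{t-r}(x)=z$, so the naive ``the orbit of $x$ passed through $z\in D$'' argument fails. The first-entry argument into the open set $D_\xi$ circumvents this, but it only locates a boundary point $\varphi_c(x)$, which a priori could lie in $\varphi_\xi(D)$ rather than in $D$; the crucial and least routine step is excluding that possibility, and this is precisely where the hypothesis $\varphi_\xi(D_\xi)\subseteq X_\xi$ enters.
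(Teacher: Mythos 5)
Your proof is correct and follows essentially the same route as the paper: the boundary/first-entry argument for (1) (you use the start of the last excursion into $D_\xi$ and the inclusion $\partial D_\xi\subseteq D\cup\varphi_\xi(D)$, the paper uses the first entry time and a sequence argument, but the key step --- ruling out $\varphi_\xi(D)$ via $\varphi_\xi(D_\xi)\subseteq X_\xi$ --- is identical), and the same use of (1) plus openness of $D_\xi$ for the upper semicontinuity in (2). The one substantive difference is in (3): the paper applies item (1) directly to the impulsive semiflow $\phi$, whereas you decompose the impulsive orbit into $\varphi$-segments issuing from $I(D)\subseteq X_\xi$; your version is the more careful one, and it rightly makes explicit that this step needs $I(D)\cap \overline{D_\xi}=\emptyset$ (the standing smallness of $\xi$ recorded just before the lemma), without which the statement of (3) would actually fail.
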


\begin{proof}
\begin{upshape}
First we prove (\ref{CC1}). Let  $x\notin D_{\xi}$ and $t>0$ such that  $\varphi_t(x)\in D_{\xi}$. Take
\begin{eqnarray*}\label{inf}
\tau=\inf\{s<t: \varphi_s(x)\in D_{\xi}\}.
\end{eqnarray*}
Then, we have   $\varphi_{\tau}(x)\in \partial{D_{\xi}}$ because $D_{\xi}$ is open. Since $D$ is a compact set and $\varphi$ is continuous, there are sequences $(z_n)_{n\geq1}$ in $D$ and $(u_n)_{n\geq1}$ in $(0,\xi)$  such that $z_n\to z\in D$, $u_n\to u \in [0,\xi]$,  and 
$$\varphi_{u_n}(z_n)\to\varphi_u(z)=\varphi_{\tau}(x).$$
If $u=0$, we are done. Assume that $u\neq0$. Then $u\in (0,\xi)$ or $u=\xi$. If $u\in (0,\xi)$, then $\varphi_{\tau}(x)=\varphi_u(z)\in D_{\xi}$. Since $D_{\xi}$ is an open set and the semiflow $\varphi$ is continuous, there exists $\tilde{\tau}<\tau$ such that $\varphi_{\tilde{\tau}}\in D_{\xi}$. This contradicts the definition of $\tau$. On the other hand, if $u=\xi$, then there exists  $\eta\in (0,\xi)$ such that $\varphi_{\tau+\eta}(x)=\varphi_{\xi+\eta}(z)= \varphi_{\xi}(\varphi_{\eta}(z)) \in D_{\xi}$, contradicting the fact that $\varphi_{\xi}(D_{\xi})\subset X_{\xi}$. This proves  (\ref{CC1}).

Now, we prove item (\ref{CC2}). By Theorem 2.7 in \cite{i5} $\tau_1$ is lower semicontinuous on $X\setminus D$. Since $\tau^*(x) =0$ for $x \in D$, we conclude that $\tau^*$ is lower semicontinuous on $X$. Hence, it is enough to prove that $\tau^*$ is upper semicontinuous on $ X_{\xi}\cup D$.
First, let us consider $x \in X_{\xi}$. If $\tau^*(x)=\infty$  then we are done. Assume $\tau^*(x)<\infty$ and fix $\varepsilon>0$. Without loss of generality we can assume  that $\varphi_{\tau^*(x)+\varepsilon}(x)$ belongs  $D_{\xi}$. Since $D_{\xi}$ is an open set, there exists $\gamma>0$ such that 
$$B(\varphi_{\tau^*(x)+\varepsilon}(x),\gamma)\subset D_{\xi}.$$
 Since $\varphi$ is continuous, there exists $\delta>0$ such that for all $ y \in B(x,\delta)$
$$ d(\varphi_{\tau*(x) + \varepsilon} (x),\varphi_{\tau*(x) + \varepsilon}(y))<\gamma.$$  By item (1), for all $y\in X_{\xi}$, there exists $\tau<\tau^*(x)+\varepsilon$ such that  $\varphi_{\tau}(y)\in D$. This implies $\tau^*(y)\leq \tau<\tau^*(x)+\varepsilon$ and $\tau^*$ is upper semicontinuous at $x$. Finally, for the point $x\in D$, there exists $\delta>0$ such that
for all $y\in B(x,\delta) $ we have  $\varphi_{\varepsilon}(y)\in D_{\xi}$. If $y\in D$, then $\tau^*(y)=0$. If $y\in X_{\xi}$, then the above argument shows that $\tau^*(y)<\varepsilon$. Hence, we conclude (\ref{CC2}).

Finally, we prove  (\ref{CC3}).  Let us consider $x\in X_{\xi}$ and suppose that there exists $t\geq 0$ such that $\phi_t(x)\notin X_{\xi}$. This implies $\phi_t(x) \in D$ or $\phi_t(x)\in D_{\xi}$. If $\phi_t(x)\in D$, then by definition of impulsive semiflow we must have $t=0$ and $x\in D$, which is a contradiction. Now, assume that $\phi_t(x)\in D_{\xi}$. By  (\ref{CC1}), there exists $\tau<t$ such that  $\phi_{\tau}(x)\in D$,  hence $\tau=0$  and $x\in D$, which is again a contradiction. This completes the proof of (\ref{CC3}).

\end{upshape}
\end{proof}

For the impulsive dynamical system $(X,\varphi,D,I)$, let us consider the quotient space $X/\sim$, where $\sim$ is the equivalence relation given by 
$$x\sim y \,\,\ \Leftrightarrow \,\,\ x=y, \,\,\ y=I(x), \,\,\ x=I(y) \,\,\ \text{or} \,\,\ I(x)=I(y).$$
Let $\pi:X\to X/\sim$ be the canonical projection and let us write $\pi(x)=\tilde{x}$ for any $x\in X$. We endow $X/\sim$ with the quotient topology. Moreover, we define on $X/\sim$  the pseudometric 
$$\tilde{d}(\tilde{x},\tilde{y})=\inf\{d(p_1,q_1)+d(p_2,q_2)+\dots+d(p_n,q_n)\},$$
where the infimum is taken over all pairs of finite sequences $(p_1,p_2,\dots ,p_n)$ and $(q_1, q_2, \dots, q_n)$ with $p_1\in \tilde{x}$, $q_n\in \tilde{y}$ and $q_i\in \tilde{p}_{i+1}$, for all $i=1,\dots,n-1$.

Recall that the quotient topology is $T_0$ when the equivalence classes are closed. Moreover, this  topology contains the topology induced by the above pseudometric. We will prove that, under certain assumptions, the above pseudometric is actually a metric. We start with the following result.
\begin{lema}\label{cero}
\begin{upshape}
Let $(X,\varphi,D,I)$ be an impulsive dynamical system. If $I(D)\cap D=\emptyset$ and $I$ is Lipschitz, then given $\tilde{x},\tilde{y}\in \pi(D)$, we have $$\tilde{d}(\tilde{x},\tilde{y})=0 \Rightarrow \tilde{x}=\tilde{y}.$$
\end{upshape}
\end{lema}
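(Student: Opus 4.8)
The plan is to prove the contrapositive: assuming $\tilde x\neq\tilde y$ with $\tilde x,\tilde y\in\pi(D)$, I will exhibit a strictly positive lower bound for $\tilde d(\tilde x,\tilde y)$. First I record the structural facts. Since $D$ is closed in the compact space $X$ it is compact, and as $I$ is continuous $I(D)$ is compact as well; the hypothesis $I(D)\cap D=\emptyset$ then yields a positive gap $\rho_0:=\operatorname{dist}(D,I(D))>0$. Writing $L$ for a Lipschitz constant of $I$, the class of any $x\in D$ is $\tilde x=\{I(x)\}\cup I^{-1}(I(x))$, whose unique point in $I(D)$ I call its \emph{center} $I(x)$ and whose remaining points lie in $D$. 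In particular, for $x,y\in D$ one has $\tilde x=\tilde y$ iff $I(x)=I(y)$, so it suffices to show that $I(x)\neq I(y)$ forces $\tilde d(\tilde x,\tilde y)\geq\kappa$ with $\kappa:=\min\{\rho_0,\ \delta_0/\max(1,L)\}>0$, where $\delta_0:=d(I(x),I(y))>0$.

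The main device is the \emph{center map} $g:X\to X$, given by $g(z)=I(z)$ for $z\in D$ and $g(z)=z$ for $z\notin D$. A direct check shows $g$ is constant on every $\sim$-class (its value is the center when the class meets $D$, and the point itself for a singleton class), so the free identifications built into the pseudometric do not move $g$. For any admissible chain $p_1,q_1,\dots,p_n,q_n$ with $p_1\in\tilde x$, $q_n\in\tilde y$ and $q_i\sim p_{i+1}$, the value of $g$ can only change across the paid moves $p_i\to q_i$, whence
$$\delta_0=d\bigl(g(p_1),g(q_n)\bigr)\leq\sum_{i=1}^{n}d\bigl(g(p_i),g(q_i)\bigr).$$
The heart of the matter is to control each center-jump $d(g(p_i),g(q_i))$ by $\max(1,L)\,d(p_i,q_i)$. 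When $p_i,q_i$ lie on the same side this is immediate: if both lie outside $D$ the jump equals $d(p_i,q_i)$, while if both lie in $D$ it equals $d(I(p_i),I(q_i))\leq L\,d(p_i,q_i)$. If one lies in $D$ and the other in $I(D)$ then $d(p_i,q_i)\geq\rho_0$, so the chain already has length $\geq\rho_0\geq\kappa$.

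The genuinely delicate case, which I expect to be the main obstacle, is a paid move between a point of $D$ and a point lying \emph{outside} $D\cup I(D)$ (a singleton class adjacent to $D$): there $d(p_i,q_i)$ can be arbitrarily small while $g$ leaps by about $\rho_0$, because teleporting across the gap $D\leftrightarrow I(D)$ is free. To handle this I will not estimate such moves individually but group the chain into maximal \emph{excursions} through singleton classes. Along an excursion the chain moves only geometrically (singleton classes carry no nontrivial identification), so if $a,b$ denote the entry and exit points in the two flanking classes, the geometric costs of the excursion sum to at least $d(a,b)$. I then split into cases according to whether $a,b$ lie in $D$ or in $I(D)$: if both lie in $D$, Lipschitzness gives $d(g(a),g(b))=d(I(a),I(b))\leq L\,d(a,b)$; if both are centers, $g(a)=a$, $g(b)=b$; and if one lies in $D$ and the other in $I(D)$, then $d(a,b)\geq\rho_0$, so the excursion alone costs at least $\rho_0\geq\kappa$. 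In every case the net change of $g$ across the excursion is at most $\max(1,L)$ times its cost, unless that cost already exceeds $\rho_0$.

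Assembling these estimates, I telescope $g$ at the level of the successive $D$-classes $\tilde x=\mathcal D_0,\dots,\mathcal D_P=\tilde y$ visited by the chain: each segment joining consecutive $\mathcal D_r$ (a direct move or a full excursion, each paid move charged exactly once) either costs at least $\rho_0$ or changes the center by at most $\max(1,L)$ times its cost. Comparing with $\delta_0=d(I(x),I(y))\leq\sum_r d(\text{center}_r,\text{center}_{r+1})$ gives $\delta_0\leq\max(1,L)\,\tilde d(\tilde x,\tilde y)$ or $\tilde d(\tilde x,\tilde y)\geq\rho_0$; either way $\tilde d(\tilde x,\tilde y)\geq\kappa>0$, contradicting $\tilde d(\tilde x,\tilde y)=0$. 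The only fussy point in a full writeup is the bookkeeping of excursions so that each paid move is counted once; the conceptual obstacle is precisely that no single Lipschitz function on $X$ can be $\sim$-invariant, since the identification glues the far-apart sets $D$ and $I(D)$, which is exactly why the gap estimate $\rho_0$ and the Lipschitz estimate must be combined rather than used separately.
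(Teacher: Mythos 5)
Your proposal is correct and follows essentially the same route as the paper's proof: both decompose the chain into maximal geometric runs separated by genuine identifications, use the positive gap $\operatorname{dist}(D,I(D))>0$ to show each run's endpoints lie on the same side ($D$ or $I(D)$), apply the Lipschitz constant of $I$ to the runs starting in $D$, and telescope at the level of the $I$-images (your ``centers''). Your center map $g$ and the explicit lower bound $\kappa=\min\{\rho_0,\delta_0/\max(1,L)\}$ are just a cleaner packaging of the paper's direct $\varepsilon$-estimate $d(x,y)\leq\max\{C,1\}\varepsilon$; no substantive difference.
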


\begin{proof}

Since $I(D)\cap D=\emptyset$, $I$ is continuous and $D$ is compact, there exists $\alpha>0$ such that for $z\in D$ and $w\in I(D)$, we have $d(z,w)>\alpha$. Take $\tilde{x},\tilde{y}\in \pi(D)$ with $\tilde{d}(\tilde{x},\tilde{y})=0$. For any $0<\varepsilon<\alpha$ there exist $(p_1,p_2,\dots,p_n)$ and $(q_1,q_2,\dots,q_n)$ with $p_1\in\tilde{x}$, $q_n\in\tilde{y}$ and $q_i\in\tilde{p}_{i+1}$ for all $i\in\{1,\cdots,n-1\}$ such that
$$\sum_{i=1}^n d(p_i,q_i)<\varepsilon.$$
Without loss of generality we can assume that $x,y\in I(D)$. Then, we have
$$\tilde{x}=\{x\}\cup I^{-1}(\{x\}) \mbox{ and } \tilde{y}=\{y\}\cup I^{-1}(\{y\}).$$
Define $i_0 =0$ and
$$i_1=\max\{i\in\{1,\dots,n-1\}:  p_j=q_{j-1} \mbox{ for all }j \mbox{ with }0< j\leq i\}.$$
We have  two cases.

\begin{cas2} $p_1\in D$\\
\begin{upshape}
Note that for any $z\notin D\cup I(D)$, $\tilde{z}=\{z\}$. This implies that $q_{i_1}\in D\cup I(D)$. We claim that $q_{i_1}\in D$. Indeed, if $q_{i_1}\in I(D)$ then
$$\alpha  <d(p_{1},q_{i_1})\leq \sum _{i=1}^{i_1} d(p_{i},q_{i})  \leq \sum _{i=1}^n d(p_{i},q_{i})<\varepsilon.$$
This gives is a contradiction.

\end{upshape}
\end{cas2}

\begin{cas2}
\begin{upshape}
$p_1\in I(D)$.\\
Using the same argument as above, one deduces $q_{i_1}\in I(D)$.

\end{upshape}
\end{cas2}
Now, suppose that  $i_1=n$. Accordingly to the cases  $p_1 \in D$ or $p_1 \in I(D)$ respectively,  we have 

$$d(x,y)  =d(I(p_1),I(q_n))\leq C\sum_{i=1}^n d(p_i,q_i)<C\varepsilon$$
or
$$d(x,y)  =d(p_1,q_n)\leq \sum_{i=1}^n d(p_i,q_i)<\varepsilon,$$ where $C>0$ is a  Lipschitz constant for $I$. We conclude $$ d(x,y) \leq \max \{C, 1\} \varepsilon.$$
On the other hand, if $i_1<n$, we must consider two cases: If $p_1 \in D$ then $d(I(p_1),I(q_{i_1})) \leq C\sum_{i=1}^{i_1} d(p_i,q_i) $,  because 
$$   d(I(p_1),I(q_{i_1})) \leq Cd(p_1,q_{i_1})  \leq  C\sum_{i=1}^{i_1} d(p_i,q_i).$$
And, if $p_1 \in I(D)$ then $d(p_1,q_{i_1})\leq \sum_{i=1}^{i_1} d(p_i,q_i)$. Now, suppose  we have defined  $i_{k}$.  Then, we define 
$$i_{k+1}=\max\{i\in\{i_{k}+1,\dots,n-1\}: p_j=q_{j-1} \mbox{ for all } j \mbox{ with } i_k < j\leq i \}.$$
Again,  by using the same argument  as above, we have $q_{i_{k+1}}\in D$  when $p_{i_{k}+1}\in D$ or $q_{i_{k+1}}\in I(D)$ when $p_{i_k+1}\in I(D)$. Moreover, we have 
$  d(I(p_{i_k +1}),I(q_{i_{k+1}})) \leq C\sum_{i=i_k +1 }^{i_{k+1}} d(p_i,q_i) $ when $q_{i_{k+1}} \in D$ and  $ d(p_{i_k +1},q_{i_{k+1}})\leq \sum_{i=i_k +1}^{i_{k+1}} d(p_i,q_i) $ when $q_{i_{k+1}} \in I(D)$. Note that  for any $k$,   $p_{i_k+1}\neq q_{i_k}$, hence we have three alternatives: $q_{i_{k}}=I(p_{i_k+1})$ or $I(q_{i_{k}})=I(p_{i_k+1})$ or $I(q_{i_{k}})=p_{i_k+1}$. Furthermore, there exists $l$ such that $i_l =n$. By using this decomposition we deduce
$$  d(x,y) \leq \sum_{k=0}^{l-1} A_k d(p_{i_{k}+1},q_{i_{k+1}}),$$
where $A_k = 1$ when $p_{i_k +1} \in I(D)$ and $A_k = C$ when   $p_{i_k +1} \in D$. Putting all together we get $$ d(x,y) \leq \max \{ C, 1\} \sum_{i=1}^n d(p_i,q_i)<\max \{ C, 1\}\varepsilon.  $$
Finally, since $\varepsilon$ is arbitrary, we conclude $x=y$ and so $\tilde{x}=\tilde{y}$.

\end{proof}

\begin{propo}\label{metricadelcociente}
\begin{upshape}Let $(X,\varphi,D,I)$ be an impulsive dynamical system. If $I(D)\cap D=\emptyset$ and $I$ is Lipschitz,
then $(\pi(X),\tilde{d})$  is a compact metric space.
\end{upshape}
\end{propo}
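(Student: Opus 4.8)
The plan is to establish two separate facts: that the pseudometric $\tilde{d}$ actually separates points of $\pi(X)$ (so it is a genuine metric), and that $(\pi(X),\tilde{d})$ is compact. The pseudometric axioms are routine from the chain definition and I would dispatch them quickly: symmetry follows by reversing an admissible chain, and the triangle inequality by concatenating two chains, since if $q_n\in\tilde{y}$ ends the first chain and $p_1'\in\tilde{y}$ starts the second, then $q_n\sim p_1'$, so the juncture is itself an admissible step and the two chains glue. The substantive content is therefore separation.

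For separation I would first classify the equivalence classes. Because $I(D)\cap D=\emptyset$, every class is either a singleton $\{x\}$ with $x\notin D\cup I(D)$, or a \emph{nontrivial} class of the form $\{w\}\cup(I^{-1}(w)\cap D)$ with $w\in I(D)$; the nontrivial ones are exactly the classes in $\pi(D)=\pi(I(D))$. When both $\tilde{x},\tilde{y}\in\pi(D)$, the implication $\tilde{d}(\tilde{x},\tilde{y})=0\Rightarrow\tilde{x}=\tilde{y}$ is precisely Lemma~\ref{cero}, which I may invoke. It remains to handle the case in which at least one class, say $\tilde{x}=\{x\}$, is a singleton with $x\notin D\cup I(D)$.

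The key observation is that any nontrivial identification $q_i\sim p_{i+1}$ with $q_i\neq p_{i+1}$ forces $q_i\in D\cup I(D)$, as one sees by inspecting the three nontrivial alternatives defining $\sim$. Given any admissible chain from $\tilde{x}$ to $\tilde{y}$ I then split into two possibilities. If the chain uses no nontrivial jump, then $p_{i+1}=q_i$ throughout, and the triangle inequality gives total cost $\ge d(x,q_n)\ge d(x,\tilde{y})$, where $d(x,\tilde{y})=\inf_{q\in\tilde{y}}d(x,q)$. If it does use a nontrivial jump, then the initial portion reaching the first jump point $q_{i_0}\in D\cup I(D)$ already has cost $\ge d(x,q_{i_0})\ge d(x,D\cup I(D))$. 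Since $D$ is closed, $I(D)$ is compact (so $D\cup I(D)$ is closed) and $x\notin D\cup I(D)$, the number $\rho:=d(x,D\cup I(D))$ is strictly positive; and since $\tilde{x}\neq\tilde{y}$ one has $d(x,\tilde{y})>0$ as well, because if $\tilde{y}$ is nontrivial then $\tilde{y}\subseteq D\cup I(D)$ gives $d(x,\tilde{y})\ge\rho$, while if $\tilde{y}=\{y\}$ then $d(x,\tilde{y})=d(x,y)>0$. Taking the infimum over all chains yields $\tilde{d}(\tilde{x},\tilde{y})\ge\min\{\rho,d(x,\tilde{y})\}>0$, which settles the remaining cases and proves that $\tilde{d}$ is a metric.

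Finally, compactness follows cheaply: the one-term chain $p_1=x,\ q_1=y$ shows $\tilde{d}(\tilde{x},\tilde{y})\le d(x,y)$, so $\pi:(X,d)\to(\pi(X),\tilde{d})$ is $1$-Lipschitz, hence continuous; as $X$ is compact and $\pi$ is onto $\pi(X)$, the image $(\pi(X),\tilde{d})$ is compact. The genuinely delicate step is the separation of two nontrivial classes, where chains may bounce repeatedly between $D$ and $I(D)$, but this is exactly what Lemma~\ref{cero} already provides; the only new work is the elementary ``a chain must pay at least $d(x,D\cup I(D))$ to leave a singleton class'' estimate sketched above.
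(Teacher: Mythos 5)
Your proposal is correct and follows essentially the same route as the paper: compactness comes from the continuity and surjectivity of $\pi$ on the compact space $X$, and separation is reduced to Lemma~\ref{cero} for classes in $\pi(D)$ together with the observation that a chain leaving a singleton class $\{x\}$ with $x\notin D\cup I(D)$ must cost at least $\min\{d(x,D\cup I(D)),\,d(x,\tilde{y})\}>0$. Your write-up is in fact slightly more careful than the paper's, since you explicitly treat chains with no nontrivial jump and verify the pseudometric axioms, but the underlying argument is the same.
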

\begin{proof}
\begin{upshape}
Since $X$ is a compact metric space and $\pi:(X,d)\to (\pi(X),\tilde{d})$ is continuous and surjective, we conclude that $(\pi(X),\tilde{d})$ is also compact. We claim that for all $\tilde{x},\tilde{y}\in \pi(X)$ with $\tilde{d}(\tilde{x},\tilde{y})=0$ we have $\tilde{x}=\tilde{y}$. Indeed, suppose $\tilde{x}\notin\pi(D)$. Then $x \notin D \cup I(D)$. Put $\alpha = d(x, D \cup I(D))$ and take $0<\varepsilon<\alpha$ . Since $\tilde{d}(\tilde{x},\tilde{y})=0$  there exist $(p_1,p_2,\dots,p_n)$ and $(q_1,\dots,q_n)$ with $p_1=x$, $q_n\in\tilde{y}$ and $q_i\in\tilde{p}_{i+1}$ for all $i\in\{1,\dots,n-1\}$, such that
$$\sum_{i=1}^n d(p_i,q_i)<\varepsilon.$$
Let us consider
$$i_1=\max\{i\in\{1,\dots,n-1\}: p_i=q_{i-1}\}.$$
We have $q_{i_1}\in D\cup I(D)$. But 
$$\varepsilon > \sum_{i=1}^{n} d(p_i,q_i)\geq \sum_{i=1}^{i_1} d(p_i,q_i)\geq d(x,q_{i_1})\geq d(x,D\cup I(D))=\alpha,$$
which gives a contradiction. Therefore we must have $\tilde{x}\in \pi(D)$. By symmetry we also have $\tilde{y}\in \pi(D)$. By Lemma \ref{cero}, we obtain the desired claim. This finishes the proof.
\end{upshape}
\end{proof}
Assuming that $I(D)\cap D=\emptyset$, $I$ is Lipschitz and $D_{\xi}$ is open, we conclude that 
$$\pi(X_{\xi})=\pi(X \setminus D_{\xi})$$
is a compact metric space. Moreover, for any $x,y\in X_{\xi}$ we have $x\sim y$ if only if $x=y$. This shows that $\pi|_{X_{\xi}}$ is a continuous bijection (not necessarily a homeomorphism) from $X_{\xi}$ onto $\pi(X_{\xi})$. We define the induced  semiflow 
$$
\tilde{\phi}:\mathbb{R}_0^+\times\pi(X_{\xi})\to \pi(X_{\xi})
$$
by
$$\tilde{\phi}(t,\pi(x))=\pi(\phi(t,x)).$$

\begin{propo}
\begin{upshape} Let $(X,\varphi,D,I)$ be an impulsive dynamical system. If $I(D)\cap D=\emptyset$, $D_{\xi}$ is open  for some $\xi>0$ and $\varphi_{\xi}(D_{\xi})\subset X_{\xi}$, then $\tilde{\phi}:\mathbb{R}_0^+\times\pi(X_{\xi})\to \pi(X_{\xi})$ is a continuous semiflow.
\end{upshape}
\end{propo}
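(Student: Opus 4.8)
The plan is to dispatch the algebraic content first and then concentrate on continuity, which is the whole reason for passing to the quotient.

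First I would check that $\tilde\phi$ is well defined and is a semiflow. By the discussion preceding the statement, $\pi|_{X_\xi}$ is a bijection onto $\pi(X_\xi)$, so each point of $\pi(X_\xi)$ has a unique representative in $X_\xi$ and there is no ambiguity in the formula $\tilde\phi(t,\pi(x))=\pi(\phi(t,x))$. That the image lands in $\pi(X_\xi)$ is exactly part (\ref{CC3}) of Lemma \ref{C}, which gives $\phi_t(X_\xi)\subset X_\xi$. The identities $\tilde\phi_0=\mathrm{id}$ and $\tilde\phi_{t+s}=\tilde\phi_t\circ\tilde\phi_s$ then follow from the corresponding identities for $\phi$, together with the defining relation $\tilde\phi_t\circ\pi=\pi\circ\phi_t$ on $X_\xi$ and a second use of part (\ref{CC3}) of Lemma \ref{C} to re-apply the formula at the intermediate point $\phi_s(x)\in X_\xi$.

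Next I would phrase continuity as a sequential statement, which is legitimate since $(\pi(X_\xi),\tilde d)$ is a compact metric space by Proposition \ref{metricadelcociente}. Let $(t_n,\tilde x_n)\to(t_0,\tilde x_0)$ and let $x_n,x_0\in X_\xi$ be the unique representatives. The codomain is harmless: since the definition of $\tilde d$ gives $\tilde d(\pi(a),\pi(b))\le d(a,b)$, any $d$-convergence of $\phi_{t_n}(x_n)$ transfers to $\tilde d$-convergence of the images. All the difficulty lies on the domain side, where $\pi|_{X_\xi}$ is only a continuous bijection, not a homeomorphism: $\tilde d(\tilde x_n,\tilde x_0)\to0$ need not give $d(x_n,x_0)\to0$. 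Concretely, passing to a subsequence and using that $X\setminus D_\xi$ is compact, I would extract $x_n\to z\in X\setminus D_\xi$ with $\pi(z)=\tilde x_0$; here $z$ may lie in $D$, in which case $x_0=I(z)$ (indeed $I(z)\in X_\xi$ and $x_0\in X_\xi$ lie in the same class, and classes meet $X_\xi$ in a single point, while $z=I(x_0)$ is excluded by $I(D)\cap D=\emptyset$). Thus a $\tilde d$-convergent sequence of representatives may $d$-converge to a point of $D$ rather than to the chosen representative $x_0\in I(D)$; this is precisely the $D\leftrightarrow I(D)$ gluing at work, and it is the mechanism by which the quotient erases the jump discontinuity of $\phi$.

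The core of the proof is then to show that $\pi\circ\phi$ is sequentially continuous into $(\pi(X_\xi),\tilde d)$, that is, $x_n\to z$ and $t_n\to t_0$ imply $\pi(\phi_{t_n}(x_n))\to\pi(\phi_{t_0}(x_0))$. Since $T(x)\equiv\infty$, the point $x_0$ has only finitely many impulse times in $[0,t_0]$, and I would argue by tracking these finitely many impulses. Away from impulse times $\phi$ coincides with the continuous flow $\varphi$, and the positions of the impulses of a nearby point are controlled by the continuity of $\tau^*$ on $X_\xi\cup D$ from part (\ref{CC2}) of Lemma \ref{C}: when $z\in D$ one gets $\tau^*(x_n)\to\tau^*(z)=0$, so the trajectory of $x_n$ hits $D$ at a time tending to $0$ and at a point tending to $z$, jumps to an image tending to $I(z)=x_0$, and thereafter shadows the trajectory of $x_0$; the jump itself is invisible after applying $\pi$, since the hitting point of $D$ and its image under $I$ are identified. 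Iterating over the finitely many impulses, and using at each impulse time that the two one-sided limits of $\phi_t$ are a point of $D$ and its $I$-image (hence have equal $\pi$-value, so $t\mapsto\pi(\phi_t(x_0))$ is continuous there), yields the claimed convergence and therefore the joint continuity of $\tilde\phi$.

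The main obstacle is this last inductive step: controlling, uniformly along the sequence, the number and positions of the impulses of the perturbed points, and showing that each jump is absorbed by a single identification rather than producing an uncontrolled error. This is where all the hypotheses enter. The condition $I(D)\cap D=\emptyset$ provides the positive gap (as exploited in Lemma \ref{cero}) that separates the sheets $D$ and $I(D)$ and forbids spurious short chains in the infimum defining $\tilde d$; the openness of $D_\xi$ together with $\varphi_\xi(D_\xi)\subset X_\xi$ gives, via part (\ref{CC1}) of Lemma \ref{C}, the geometric control that prevents impulses of nearby points from accumulating, keeping their count finite and matched to those of $x_0$ for $n$ large; and the continuity of $\tau^*$ pins down where each impulse occurs in the limit. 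Combining these turns the discontinuous shadowing argument into a genuine continuity statement for $\tilde\phi$, completing the proof.
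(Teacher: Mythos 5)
Your route is genuinely different from the paper's. The paper disposes of this proposition in two lines: it verifies, via Lemma \ref{C}(\ref{CC2}), that $\tau^*$ is continuous on $X_\xi$, and then invokes Proposition 4.3 of \cite{v2}, which is precisely the statement that the quotient semiflow of an impulsive system with continuous first impulse time is continuous. What you have written is, in effect, an attempt to reprove that cited proposition from scratch. Your preliminary reductions are all correct and worth having: well-definedness and the semigroup law via Lemma \ref{C}(\ref{CC3}), the inequality $\tilde d(\pi(a),\pi(b))\le d(a,b)$ handling the codomain, and the careful analysis of why a $\tilde d$-convergent sequence of representatives in $X_\xi$ may $d$-converge to a point $z\in D$ with $x_0=I(z)$. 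That last observation correctly isolates where the difficulty lives.

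However, the decisive step --- the induction over impulse times showing $\pi(\phi_{t_n}(x_n))\to\pi(\phi_{t_0}(x_0))$ --- is described rather than proved, and as described it has a concrete gap. To ``iterate over the finitely many impulses'' and keep the impulses of $x_n$ ``matched to those of $x_0$ for $n$ large,'' you need a uniform positive lower bound on the time between consecutive impulses, i.e.\ $\inf_{w\in I(D)}\tau_1(w)>0$; otherwise nothing prevents the perturbed orbits from having an unbounded number of jumps in $[0,t_0]$, and the induction does not close. This bound is available --- $I(D)$ is compact, $I(D)\subset X_\xi$ once $\xi$ is small enough that $I(D)\cap\overline{D_\xi}=\emptyset$ (which uses $I(D)\cap D=\emptyset$), $\tau^*=\tau_1>0$ there by the definition of an impulsive dynamical system, and $\tau^*$ is continuous on $X_\xi$ by Lemma \ref{C}(\ref{CC2}) --- but you never state or use it; your appeal to Lemma \ref{C}(\ref{CC1}) does not supply it. Relatedly, applying the continuity of $\tau^*$ at the post-jump points $I(x^k)$ requires knowing those points lie in $X_\xi\cup D$, which again rests on the containment $I(D)\subset X_\xi$ that you should make explicit. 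With these two points supplied and the inductive estimate actually carried out (including the case where $t_0$ is itself an impulse time of $x_0$), your argument would be a valid self-contained replacement for the citation of \cite{v2}; as it stands it is a correct plan with its central step unexecuted.
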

\begin{proof}
\begin{upshape}
By Lemma \ref{C}, we have that $\tau^*$ is continuous on $X_{\xi}$. Applying Proposition 4.3 in \cite{v2} we obtain the continuity of the semiflow $\tilde{\phi}$.
\end{upshape}
\end{proof}

For the following lemmas, let us consider  $(X,\varphi,D,I)$ a regular impulsive system. Since  $D$ is compact, $I$ is continuous and $ I(D)\cap D =\emptyset$,  there exists $\epsilon>0$ such that the  $\epsilon$-neighborhoods 
$$V_1=\{x\in X: d(x,D)<\epsilon\}, V_2=\{x\in X:d(x,I(D))<\epsilon\}$$ 
of $D$ and $I(D)$, respectively, are disjoint, i.e. $V_1\cap V_2=\emptyset$.

\begin{lema}\label{s1}
\begin{upshape} Let us consider $\phi:\mathbb{R}^{+}_0\times X\to X$  the semiflow  associated to the regular impulsive system  $(X,\varphi,D,I)$. Then, there exist a compact metric space $Y$, a continuous semiflow $\tilde{\phi}:\mathbb{R}_0^+\times Y\to Y$ and a uniform continuous bijection $H:X_{\xi}\to Y$ such that 
\begin{equation}\label{sconj}
\tilde{\phi}_t\circ H=H\circ\phi_t, \mbox{ for all }t\geq 0
\end{equation}
Moreover,
 $\hat{h}_s(\phi|_{X_{\xi}})= h_{top}(\tilde{\phi}).$
\end{upshape}
\end{lema}
\begin{proof}
Let us put $Y=\pi(X_{\xi})$. By properties of  regular impulsive systems we know that $Y$ is a compact metric space. Choose
$H= \pi|_{X_{\xi}} $ and $\tilde{\phi}$ the induced semiflow. These cleary satisfy \eqref{sconj}.
Now, by Lemma \ref{semiconj} and Theorem \ref{A} we have
$$ \hat{h}_s(\phi|_{X_{\xi}})\geq \hat{h}_s(\tilde{\phi})= h_{top}(\tilde{\phi}).$$
In order to prove the other inequality, let us consider $\delta>0$,  $0<\varepsilon<\epsilon < \delta$ , $T>0$ and $E$ a $(\phi,T,\varepsilon,\delta)$-separated set. For all $x,y\in E$ there exists $t_0\in[0,T]$ such that for all $s\in[t_0,t_0+\delta]$
$$d(\phi_{s}(x),\phi_{s}(y))\geq \varepsilon.$$
We claim that $\pi(E)$ is a $(\tilde{\phi},T,\varepsilon)$-separated set. Indeed, suppose that  $\pi(E)$ is not $(\tilde{\phi},T,\varepsilon)$-separated. Then. there exist $x, y\in E$, $x\neq y$ such that for all $t\in[0,T]$, we have
$$\tilde{d}(\tilde{\phi}_{t}(\pi(x)),\tilde{\phi}_{t}(\pi(y)))< \varepsilon.$$
In particular
\begin{eqnarray}\label{contradiccion1}\tilde{d}(\pi(\tilde{\phi}_{t_0}(x)),\pi(\tilde{\phi}_{t_0}(y)))< \varepsilon.\end{eqnarray}
Since $V_1\cap V_2=\emptyset$, we have three different cases.

\begin{cas3}\begin{upshape}
$\phi_{t_0}(x)\notin V_1\cup V_2$.\\
For all $\theta>0$, there exist $(p_1,\dots,p_n)$ and $(q_1,\dots,q_n)$ with $p_1\in \tilde{\phi}_{t_0}(\pi(x))$, $q_n\in \tilde{\phi}_{t_0}(\pi(y))$  and $q_i\in \tilde{p}_{i+1}$ for all $i\in\{1,\dots,n-1\}$, such that 
$$\tilde{d}(\tilde{\phi}_{t_0}(\pi(x)),\tilde{\phi}_{t_0}(\pi(y))) \geq \sum_{i=1}^n d(p_i,q_i)-\theta.$$
If there exists $i\in\{1,\cdots,n\}$ such that $q_i \in D\cup I(D)$, let us consider 
$$l=\min\{i \leq n: q_{i} \in D\cup I(D)\}.$$
Then 
$$\tilde{d}(\pi(\tilde{\phi}_{t_0}(x)),\pi(\tilde{\phi}_{t_0}(y))) \geq \sum_{i=1}^l d(p_i,q_i)-\theta>d(p_1,q_l)-\theta> d(x,q_l)- \theta>\epsilon - \theta.$$
Hence $\varepsilon>\epsilon - \theta$ by  (\ref{contradiccion1}), which gives a contradiction (because $\theta$ is arbitrarily small). On the other hand, if $q_i\notin D\cup I(D)$ for all $i\in\{1,\cdots,n\}$, then 
$$\tilde{d}(\pi(\tilde{\phi}_{t_0}(x)),\pi(\tilde{\phi}_{t_0}(y)))=d(\phi_{t_0}(x),\phi_{t_0}(y))>\varepsilon,$$
which contradicts  (\ref{contradiccion1}).
\end{upshape}\end{cas3}

\begin{cas3}\begin{upshape}
$\phi_{t_0}(y)\notin V_1\cup V_2$.\\
This follows from  the previous case by symmetry.
\end{upshape}\end{cas3}
\begin{cas2}\begin{upshape}$\phi_{t_0}(x),\phi_{t_0}(y)\in V_1\cup V_2$.\\
For all $s\in[t_0,t_0+\delta]$ we have
$$d(\phi_{s}(x),\phi_{s}(y))\geq \varepsilon.$$
By (2) and (3) in Definition \ref{RIDS}, we can choose $s$ such that $\phi_{s}(x)\notin V_1\cup V_2$ or $\phi_{s}(y)\notin V_1\cup V_2$. Applying the previous cases changing $t_0$ by $s$ we get a contradiction.
\end{upshape}\end{cas2}
We conclude
$$|E|=|\pi(E)|\leq s(\tilde{\phi},T,\varepsilon),$$
hence $\hat{h}_s(\phi|_{X_{\xi}})\leq h_{top}(\tilde{\phi})$. This proves the desired result.
\end{proof}

\begin{lema}\label{L1}
\begin{upshape} Let us consider $\phi:\mathbb{R}^{+}_0\times X\to X$  the semiflow  associated to the regular impulsive system  $(X,\varphi,D,I)$. Then
$$\hat{h}_s(\phi)=\hat{h}_s(\phi|_{X_{\xi}})$$
\end{upshape}
\end{lema}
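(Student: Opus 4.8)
The plan is to prove the two inequalities $\hat{h}_{\text{s}}(\phi|_{X_{\xi}})\leq\hat{h}_{\text{s}}(\phi)$ and $\hat{h}_{\text{s}}(\phi)\leq\hat{h}_{\text{s}}(\phi|_{X_{\xi}})$ separately. The first is immediate: by Lemma~\ref{C}(\ref{CC3}) the set $X_{\xi}$ is forward invariant under $\phi$, so $\phi|_{X_{\xi}}$ is a genuine semiflow and, for $x\in X_{\xi}$, one has $\hat{B}(x,\phi|_{X_{\xi}},T,\varepsilon,\delta)=\hat{B}(x,\phi,T,\varepsilon,\delta)\cap X_{\xi}$. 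Hence every $(\phi|_{X_{\xi}},T,\varepsilon,\delta)$-separated subset of $X_{\xi}$ is also $(\phi,T,\varepsilon,\delta)$-separated in $X$, giving $\hat{s}(\phi|_{X_{\xi}},T,\varepsilon,\delta)\leq\hat{s}(\phi,T,\varepsilon,\delta)$ for all parameters; passing to the successive limits yields the inequality.

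For the reverse inequality I would first record a \emph{uniform entry time}: for every $x\in X$ one has $\phi_{\xi}(x)\in X_{\xi}$, and therefore $\phi_{t}(x)\in X_{\xi}$ for all $t\geq\xi$. Points of $X_{\xi}$ never leave it by Lemma~\ref{C}(\ref{CC3}); if $x\in D\cup D_{\xi}$, then either an impulse occurs before time $\xi$, after which the orbit lands in $I(D)\subseteq X_{\xi}$ and remains there, or no impulse occurs in $[0,\xi]$ and then $\phi_{\xi}(x)=\varphi_{\xi}(x)\in X_{\xi}$ by condition~(2) of Definition~\ref{RIDS} together with the description of $D_{\xi}$. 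This confines all the relevant dynamics at times $t\geq\xi$ to the invariant set $X_{\xi}$, and I would exploit it by pushing forward through the shift map $\phi_{\xi}$.

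Now fix $0<\varepsilon<\epsilon<\delta$, $T>\xi$, and a $(\phi,T,\varepsilon,\delta)$-separated set $E\subseteq X$. Extract a maximal subset $E^{\ast}\subseteq E$ whose $\phi_{\xi}$-images are $(\phi|_{X_{\xi}},T-\xi,\varepsilon,\delta)$-separated in $X_{\xi}$; since two points separated at some $t\geq\xi$ yield, writing $t=\xi+t'$ and using $\phi_{t}=\phi_{t'}\circ\phi_{\xi}$, a separation of their $\phi_{\xi}$-images, the computation of Lemma~\ref{s1} shows that $\pi(\phi_{\xi}(E^{\ast}))$ is a $(\tilde{\phi},T-\xi,\varepsilon)$-separated subset of $Y$, so $|E^{\ast}|\leq s(\tilde{\phi},T-\xi,\varepsilon)$. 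By maximality every $x\in E$ fails to be separated from some $x_{i}\in E^{\ast}$ over $[\xi,T]$, which partitions $E$ into $|E^{\ast}|$ clusters. The goal is to bound each cluster by a constant $K(\varepsilon,\delta)$ independent of $T$, so that $\hat{s}(\phi,T,\varepsilon,\delta)\leq K(\varepsilon,\delta)\, s(\tilde{\phi},T-\xi,\varepsilon)$; taking $\tfrac{1}{T}\log$ and letting $T\to\infty$ then kills both $K(\varepsilon,\delta)$ and the shift $\xi$, and letting $\varepsilon\to0^{+}$, $\delta\to0^{+}$ together with Theorem~\ref{A} and Lemma~\ref{s1} (so that the growth rate of $s(\tilde{\phi},\cdot,\varepsilon)$ tends to $h_{top}(\tilde{\phi})=\hat{h}_{\text{s}}(\phi|_{X_{\xi}})$) gives $\hat{h}_{\text{s}}(\phi)\leq\hat{h}_{\text{s}}(\phi|_{X_{\xi}})$.

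The hard part will be the cluster bound $K(\varepsilon,\delta)$, and the central difficulty is that $d_{\delta}^{\phi}$ is only a pseudosemimetric, with no triangle inequality, so one cannot directly argue that two cluster points — each $d_{\delta}^{\phi}$-close to the common $x_{i}$ over $[\xi,T]$ — are mutually close, hence that their mutual separation must happen in the transient window $[0,\xi)$. I would resolve this by transporting the post-transient comparison to the compact model $(Y,\tilde{d})$: using $\tilde{d}(\pi a,\pi b)\leq d(a,b)$ and the uniform continuity of $\tilde{\phi}$ on $[0,\delta]\times Y$, a $d_{\delta}^{\phi}$-closeness at level $\varepsilon$ yields $\tilde{d}$-closeness at level $\varepsilon+\omega(\delta)$, where the genuine metric $\tilde{d}$ restores the triangle inequality and the contrapositive of the estimate in Lemma~\ref{s1} feeds the information back to $\phi|_{X_{\xi}}$; this forces the mutual separation of distinct cluster points to lie in $[0,\xi)$, so each cluster is a $(\phi,\xi,\varepsilon,\delta)$-separated set. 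Its finiteness, i.e. $K(\varepsilon,\delta)=\hat{s}(\phi,\xi,\varepsilon,\delta)<\infty$, I would obtain from the compactness of $X$ and the regularity hypotheses: since $I$ is Lipschitz and, by Lemma~\ref{C}(\ref{CC2}), the impulse-time function is continuous on $X_{\xi}\cup D$, two sufficiently $d$-close points have orbits that over the bounded interval $[0,\xi]$ differ only through a small mismatch of impulse instants, and such a mismatch is absorbed by the infimum over the $\delta$-window defining $d_{\delta}^{\phi}$; consequently a $(\phi,\xi,\varepsilon,\delta)$-separated set is uniformly $d$-separated and therefore finite. Getting these two moduli to cooperate, for $\delta$ small enough, is where I expect the bulk of the technical work to be.
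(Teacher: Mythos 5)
Your first inequality is fine and matches the paper. For the reverse inequality, however, your route leaves the two decisive steps unproven, and the first of them is quantitatively broken as stated. The cluster bound: by maximality of $E^{\ast}$, each $x\in E$ satisfies $d_{\delta}^{\phi}(\phi_t(x),\phi_t(x_i))<\varepsilon$ for all $t\in[\xi,T]$ for some centre $x_i$; even after transporting this to the genuine metric $\tilde{d}$ on $Y$ (which costs an extra $\omega(\delta)$ each way through the uniform continuity of $\tilde{\phi}$), two points of the same cluster are only mutually $\tilde{d}$-close at level $2\varepsilon+O(\omega(\delta))$. That does not contradict their being $(\phi,T,\varepsilon,\delta)$-separated at some $t\geq\xi$, so the conclusion that each cluster is $(\phi,\xi,\varepsilon,\delta)$-separated does not follow; you would have to rerun the whole argument with nested scales (e.g.\ centres separated at level $3\varepsilon$, clusters at level $\varepsilon$) and then check that the limits $\varepsilon\to0^+$, $\delta\to0^+$ still close up. The second unproven step, $\hat{s}(\phi,\xi,\varepsilon,\delta)<\infty$, is also delicate: $\tau_1$ is in general only lower semicontinuous on $X\setminus D$, Lemma \ref{C} gives continuity of $\tau^{\ast}$ only on $X_{\xi}\cup D$, and $E$ may contain points of $D_{\xi}$, so the ``small mismatch of impulse instants'' argument needs a separate treatment there. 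Finally, the uniform entry claim $\phi_{\xi}(x)\in X_{\xi}$ for every $x$ is not quite what Definition \ref{RIDS} gives (condition (2) concerns $\varphi_{\xi}(D_{\xi})$, not $\varphi_{\xi}(D)$), though this is minor.

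The paper avoids all of this with a spatial rather than temporal decomposition. It writes $E=E_1\cup E_2$ with $E_1=E\cap\overline{D_{\xi}}$ and $E_2=E\setminus\overline{D_{\xi}}$. The set $E_2$ already lies in $X_{\xi}$, so $|E_2|\leq\hat{s}(\phi|_{X_{\xi}},T,\varepsilon,\delta)$. For $E_1$, one covers the compact set $\overline{D_{\xi}}$ by finitely many $d$-balls of radius $r$ chosen so that $2r$-close points have $\varepsilon$-close $\varphi$-orbits on $[0,\eta-\xi]$; since on $\overline{D_{\xi}}$ the impulsive semiflow coincides with the continuous flow $\varphi$ for that long, two points of $E_1$ in the same ball cannot be separated before time $\tfrac{\eta-\xi}{2}>\xi$ (taking $\delta<\tfrac{\eta-\xi}{2}$), and therefore $\phi_{\xi}$ of each ball-piece is $(\phi|_{X_{\xi}},T-\xi,\varepsilon,\delta)$-separated. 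The finite cover supplies the multiplicative constant $n=n(\varepsilon)$, which disappears after taking $\tfrac{1}{T}\log$. This sidesteps both the failure of the triangle inequality for $d_{\delta}^{\phi}$ and any finiteness lemma for bounded-time separated sets, and it keeps the proof of Lemma \ref{L1} independent of the quotient construction of Lemma \ref{s1}. I would encourage you to either adopt this decomposition or, if you wish to keep your scheme, write out the two deferred steps in full with the corrected scales, since as it stands the argument is a plan rather than a proof.
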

\begin{proof}
\begin{upshape}
Since  
$X_{\xi}\subset X$, we have
$$\hat{h}_s(\phi)\leq\hat{h}_s(\phi|_{X_{\xi}}).$$
In order to prove the other inequality, let us consider $\delta>0$, $\varepsilon>0$, $T>0$ and $E\subset X$ a $(\phi,T,\varepsilon,\delta)$-separated set. Consider the decomposition
$E=E_1\cup E_2$, where  $$E_1=E\cap \overline{D_{\xi}} \mbox{ and } E_2=E\cap \overline{ D_{\xi}}^c.$$
Since $E_2$ is $(\phi|_{\overline{D_{\xi}}^c},T,\varepsilon,\delta)$-separated and $\overline{D_{\xi}}^c\subset X_{\xi}$, we have
$$|E_2|\leq \hat{s}(\phi|_{X_{\xi}},T,\varepsilon,\delta).$$
We claim that there exists $n=n(\varepsilon)>0$ such that
$$|E_1|\leq n\hat{s}(\phi|_{X_{\xi}},T,\varepsilon, \delta).$$
Indeed, since $\overline{D_{\xi}}$ is compact, we can choose $r>0$ such that for all $x,y\in \overline{D_{\xi}}$ 
$$d(x,y)<2r \Rightarrow d(\phi_t(x),\phi_t(y))<\varepsilon, \mbox{ for all } t\in[0,\eta-\xi],$$
where $\eta$ is given in Definition \ref{RIDS}. By compactness, there exists $\{z_k\}_{k=1}^n$ such that
$$\overline{D_{\xi}}\subset \bigcup_{k=1}^n B(z_k,r).$$
Let us consider a choice function  $e:E_1\to \{1,\dots,n\}$ such that $x\in B(z_{e(x)},r)$ for all $x\in E_1$. Then $$E_1=\bigcup_{i=1}^n E_1^i\mbox{, where }E_1^{i}=\{x\in E_1: e(x)=i\}.$$
Therefore, for $x,y\in E_1^{i}$,  we have
$$d(\phi_t(x),\phi_t(y))<\varepsilon, \mbox{ for all } t\in[0,\eta-\xi].$$
If we assume that $\delta<\frac{\eta-\xi}{2}$, then 
$$d_{\delta}^{\phi}(\phi_t(x),\phi_t(y))<\varepsilon, \mbox{ for all }t \in \left[0,\frac{\eta-\xi}{2}\right].$$
On the other hand, since $E$ is $(\phi,T,\varepsilon,\delta)$-separated, for all $x, y\in E_1^{i}$, $x\neq y$ , there exists $t\in[\frac{\eta-\xi}{2},T]$ such that
$$d_{\delta}^{\phi}(\phi_t(x),\phi_t(y))\geq\varepsilon .$$
Since $t\geq\frac{\eta-\xi}{2}>\xi$ and $\varphi_{\xi}(\overline{D_{\xi}})\subset X_{\xi}$, we have that $\phi_{\xi}(E_1^{i})$ is $(\phi|_{X_{\xi}},T-\xi,\varepsilon,\delta)$-separated. Taking all this into account, we obtain 
$$|E_1|\leq n \hat{s}(\phi|_{X_{\xi}},T,\varepsilon, \delta).$$
This proves the claim. Finally, since
$$|E|\leq (n+1) \hat{s}(\phi|_{X_{\xi}},T,\varepsilon, \delta),$$
we have 
$$\frac{1}{T}\log \hat{s}(\phi,T,\varepsilon,\delta)\leq \frac{1} {T} \log (n+1)+\frac{1}{T}\log \hat{s}(\phi|_{X_{\xi}},T,\varepsilon, \delta).$$
Taking limits, we get
$$\hat{h}_s(\phi)\leq \hat{h}_s(\phi|_{X_{\xi}}).$$
This completes the proof of the Lemma.
\end{upshape}
\end{proof}
\begin{proof} [Proof of Theorem \ref{F}]
By Lemma \ref{s1}, and its proof, we can choose $Y=\pi(X_{\xi}) $, $\tilde{\phi}$ the induced semiflow   and $
H= \pi|_{X_{\xi}}$.  Now, by Lemma \ref{s1}, we have 
$$ \hat{h}_s(\phi|_{X_{\xi}})= h_{top}(\tilde{\phi}).$$ %
By  Lemma \ref{L1}, we have 
$$\hat{h}_{\text{s}}(\phi)= \hat{h}_{\text{s}}(\phi|_{X_{\xi}}).$$ 
Putting all together, we get $\hat{h}_{\text{s}}(\phi)= h_{top}(\tilde{\phi})$. This completes the proof.
\end{proof}

\section{An Example}\label{6}
In this section, we show a simple example of a discontinuous semiflow where Bowen's entropies are infinite.  Moreover, we calculate our extended topological entropies. 

Consider the phase space $X$ as the annulus
$$X=\{(r\cos\theta, r\sin\theta)\in \mathbb{R}^2 : 1\leq r\leq 2 ,\theta\in[0,2\pi)\},$$
and define
$$\begin{array}{lccl}\varphi:&\mathbb{R}_0^+\times X&\to& X\\
                          & (t,x) &\to &(r\cos(\theta+t),r\sin(\theta+t))\end{array} $$
for $x=(r\cos\theta, r\sin\theta)$. Equivalently, $\varphi$ is the continuous semiflow of the vector field in $X$ given in polar coordinates by
$$
\left\{\begin{array}{ll}
r'&=0\\
\theta'&=1.
\end{array}\right.$$
Note that the trajectories of $\varphi$ are circles spinning around zero counterclockwise. Put
$$D=\left\{\left(\frac{3}{2},0\right)\right\}\subset X,$$
and define $I:D\to X$ by 
$$I\left(\frac{3}{2},0\right)=\left(1,\pi\right).$$
We associate to $(X,\varphi,D,I)$ the discontinuous semiflow $\phi:\mathbb{R}^+\times X\to X$ given in polar coordinates by 
$$\phi_t(r,\theta)=\varphi_t(r,\theta)$$
if $(r,\theta)\neq (\frac{3}{2},\theta)$, and by
$$\phi_t\left(\frac{3}{2},\theta\right)=\left\{\begin{array}{ll} \varphi_t(\frac{3}{2},\theta) & \textit{ if } t\in[0,2\pi-\theta),\\
\varphi_{t-(2\pi-\theta)}(1,\pi) & \textit{ if } t\geq 2\pi-\theta.
\end{array}\right.$$
Since  $\varphi$ is a continuous semiflow, for all $\varepsilon>0$  there exists $\beta>0$ such that for all $t\geq0$ and $(r,\theta)\in X$ 
$$s\in(t,t+\beta) \Rightarrow d(\varphi_s(r,\theta),\varphi_t(r,\theta))<\varepsilon.$$
We claim that for any fixed $\theta_0\in [0,2\pi)$, $T\geq 2\pi-\theta_0$ and $\varepsilon<\frac{1}{2}$ the set
$$E= \left \{ \phi_t \left( \frac{3}{2},\theta_0 \right) : t\in[0,2\pi-\theta_0) \right\}$$ 
is a $(\phi,T,\varepsilon)$-separated set. Indeed, let us consider $x, y\in E$ with $x\neq y$. There exists $t\in[0,2\pi-\theta_0)$ such that
$$d(\phi_t(x),\phi_t(y))\geq\frac{1}{2}.$$
Hence, for $T\geq 2\pi-\theta_0$ and $\varepsilon<\frac{1}{2}$, $E$ is a $(\phi,T,\varepsilon)$-separated set.  Therefore
$$s(\phi,T,\varepsilon)\geq |E|=\infty \mbox{ and }h_s(\phi)=\infty.$$
On the other hand, for $T> 2\pi-\theta_0$ and $\varepsilon<\frac{1}{4}$, we claim that any $(\phi,T,\varepsilon)$-spanning set $F$ contains $E$. Indeed, if $x\in E$ there exists $y\in F$ such that for all $t\in [0,T]$
$$d(\phi_t(x),\phi_t(y))<\varepsilon.$$
If $y= (r,\theta)$, $r\neq\frac{3}{2}$, then it is easy to see that there exists $t\in [0,T]$ such that
$$d(\phi_t(x),\phi_t(y))>\frac{1}{4}.$$
On the other hand, if  $y= (\frac{3}{2},\theta)$, $x\neq y$,  then there exists $t\in [0,T]$ such that
$$d(\phi_t(x),\phi_t(y))\geq \frac{1}{2}.$$
So $x=y$ and the claim is proved. Therefore $|F|\geq \infty$ which implies
$$r(\phi,T,\varepsilon)=\infty \mbox{ and }h_r(\phi)=\infty.$$

Now, we calculate $\hat{h}_{r}(\phi)$ and $\hat{h}_{s}(\phi)$. Note that in this example we can apply Lemma \ref{B1} which gives
$$\hat{h}_{r}(\phi)\leq \hat{h}_{s}(\phi).$$
Moreover,  $\hat{h}_{s}(\phi)=0$. Indeed, let $T>0$, $\varepsilon>0$ and $\delta>0$, and let $E$ be a $(\phi,T,\varepsilon,\delta)$-separated. We write $E=F\cup E_{\frac{3}{2}}$, where $E_{\frac{3}{2}}=\{(\frac{3}{2},\theta):\theta\in[0,2\pi]\}\cap E$ and $F=E\setminus E_\frac{3}{2}$. Then
$$\begin{array}{rcl}|F|\leq \frac{4\pi}{\varepsilon^2} &\mbox{and} &|E_\frac{3}{2}|\leq \frac{4\pi}{\varepsilon}+1.\end{array}$$
Hence,
$$|E|\leq \hat{s}(\phi,T,\varepsilon,\delta)\leq \frac{8\pi}{\varepsilon^2}.$$
Therefore $\hat{h}_s(\phi)=0$ as claimed. This proves that $\hat{h}_{r}(\phi)=\hat{h}_r(\phi)=0$.

\section*{Acknowledgements}

Nelda Jaque thanks the  Instituto de Matem\'atica of the Universidade Federal do Rio de Janeiro and the Instituto de Matem\'aticas of the Pontificia Universidad Cat\'olica de Valpara\'iso for their kind hospitality during the preparation of this work. The results in this paper are part of her PhD thesis at the Universidad Cat\'olica del Norte.

\end{document}